\theoremstyle{plain}
\newtheorem{theorem}{Theorem}[section]
\newtheorem{cor}{Corollary}[section]
\newtheorem{lemma}{Lemma}[section]
\numberwithin{equation}{section}
\theoremstyle{definition}
\newtheorem{mydef}{Definition}[section]
\newtheorem{rem}{Remark}
\newcommand{\algns}[1]{\begin{align*} #1 \end{align*}}
\newcommand{\algnd}[1]{\begin{aligned} #1 \end{aligned}}
\newcommand{\eqn}[1]{\begin{equation} #1 \end{equation}}
\newcommand{\eqns}[1]{\begin{equation*} #1 \end{equation*}}
\newcommand{\setof}[1]{\left\{ #1 \right\} }	
\newcommand{\sothat}{\,:\,}
\newcommand{\dual}[1]{#1^{'}}
\newcommand{\adjoint}[1]{#1^*}
\newcommand{\mat}[1]{\mathsf{#1}}
\renewcommand{\vec}[1]{\boldsymbol{#1}}
\newcommand{\dualvec}[1]{\tilde{\vec{#1}}}
\newcommand{\intd}{\mathrm{d}}	
\newcommand{\reals}{\mathbb{R}}
\newcommand{\grad}{\nabla}
\newcommand{\pd}{\partial}
\newcommand{\norm}[1]{\left\Vert#1\right\Vert}
\newcommand{\norw}[2]{\left\Vert#1\right\Vert_{#2}}
\newcommand{\inner}[2]{\left( #1, #2\right)}
\newcommand{\interpolate}[3]{\left[#1,#2\right]_{#3}}
\renewcommand{\div}{\operatorname{div}\,}
\newcommand{\curl}{\boldsymbol{\operatorname{curl}}\,}
\newcommand{\T}{\mathcal{T}}	
\newcommand{\N}{\mathcal{N}}	
\begin{document}
	\title[]{An auxiliary space preconditioner for fractional Laplacian of negative order}\thanks{The research leading to these results has received funding the European Research Council under the European Union's Seventh Framework Programme (FP7/2007-2013) / ERC grant agreement no. 339643.}
	\author{Trygve B\ae rland$^\dagger$}
	\email{trygveba@math.uio.no}
	\address{$^\dagger$Department of Mathematics, University of Oslo, Blindern, Oslo, 0316 Norway}
	
	\begin{abstract}
		Coupled multiphysics problems often give rise to interface conditions naturally formulated in fractional Sobolev spaces. Here, both positive and negative fractionality are common. When designing efficient solvers for discretizations of such problems it would then be useful to have a preconditioner for the fractional Laplacian, $(-\Delta)^s$, with $s \in [-1,1]$. 
		Previously, additive multigrid preconditioners for the case when $s \geq 0$ have been proposed. In this work we complement this construction with auxiliary space preconditioners suitable when $s \leq 0$. 
		These preconditioners are shown to be spectrally equivalent to $(-\Delta)^{-s}$, but requires preconditioners for fractional $H(\div)$ operators with positive fractionality. We design such operators based on an additive multigrid approach.
		We finish with some numerical experiments, verifying the theoretical results.
	\end{abstract}
	\maketitle
	
	\section{Introduction}
	\label{sec:introduction}
	In this paper we are concerned with the design and analysis
	of preconditioners for the fractional Laplacian with negative exponent.
	More specifically, let $\Omega \subset \reals^n$ be a bounded $n$-dimensional domain, and $s \in [0,1]$ a parameter.
	We then consider the problem of finding $u$ satisfying
	\eqn{
		\label{eq:fraclap-problem}
		(-\Delta)^{-s} u = f,
	}
	where $f$ is given. 
	Here, $H_0^1(\Omega)$ denotes the usual Sobolev space of square-integrable functions with
	square-integrable first order derivatives and zero trace on the boundary of $\Omega$, and $H^{-1}(\Omega)$ denotes its dual space.
	Then $(-\Delta)^{-s}$ is defined from the spectral decomposition
	of $(-\Delta): H_0^1(\Omega) \to H^{-1}(\Omega)$.
	Our aim in this work is to design efficient preconditioners for discretizations of $(-\Delta)^{-s}$.
	
	Due to the negative exponent, common preconditioning strategies will fail in this context.
	In particular, for positive $s$, $(-\Delta)^s$ behaves similarly to $-\Delta$
	in that the eigenfunctions corresponding to high eigenvalues are oscillatory, and vice versa. 
	As such, the error from simple iteration schemes, like Richardson's iteration, 
	are relatively smooth and can be well-represented on a coarser function space. 
	This observation suggests that multigrid operators can provide efficient preconditioners for $(-\Delta)^s$,
	and motivated the construction of additive multigrid preconditioners in \cite{baerland2018multigrid}.
	However, in our current context the roles are reversed. 
	The oscillatory eigenfunctions of $(-\Delta)^{-s}$ correspond to the lower end of the spectrum. 
	Then, neither simple smoothing procedures nor coarse grid correction will eliminate the oscillatory part of the error, 
	and therefore we cannot hope for a straightforward multigrid method to work. 
	
	The preconditioners proposed in this work will be based on the auxiliary space preconditioner framework, \cite{xu1996auxiliary}.
	Of particular note is that the transfer operator,
	whose role is to relate the original space and the auxiliary space,
	will be a differential operator.
	Consequently, the preconditioner on the auxiliary space will have to be spectrally equivalent to
	the inverse of a differential operator raised to a positive, fractional power.
	To motivate this, let $H_0^s(\Omega)$ denote the spectral interpolation (see \cite[Ch. 2]{lions1972nonhom}) between $L^2(\Omega)$ and $H_0^1(\Omega)$, and $H^{-s}(\Omega)$ the dual space of $H_0^s(\Omega)$.
	Then $(-\Delta)^{-s}$ is an isomorphism from $H^{-s}(\Omega)$ to $H_0^s(\Omega)$.
	Following the operator preconditioning framework in \cite{mardal2011preconditioning},
	an efficient preconditioner for \eqref{eq:fraclap-problem} should be based on 
	a linear, symmetric isomorphism $B^s: H_0^s(\Omega) \to H^{-s}(\Omega)$,
	the canonical choice being the Riesz mapping $(-\Delta)^s$.
	Consequently, the preconditioner should behave like a differential operator raised to a positive, fractional power.
	Then, roughly speaking, if $B^s$ consists of applications of any standard differential operator, 
	a correction is needed to compensate for this overshoot in fractionality. 
	This correction will then behave like the inverse of a fractional differential operator of positive order. 
	In particular, we will see that $B^s = -\div \Lambda^{-(1-s)} \grad$ is spectrally equivalent to $(-\Delta)^{s}$. 
	Here, $\Lambda = I - \grad \div$ is the operator realizing the $H(\div)$ inner product. 
	Thus, the problem of preconditioning $(-\Delta)^{-s}$ will be 
	transferred to the problem of preconditioning $\Lambda^{1-s}$, 
	which is amenable to an analysis similar to the one made in \cite{baerland2018multigrid}.
	This is an attractive idea because, as we will see, $\Lambda^{1-s}$ behaves similarly to
	$\Lambda$, where preconditioning strategies based on multilevel decompositions
	have proved efficient, \cite{arnold1997preconditioning,arnold2000multigrid,hiptmair1997multigrid,hiptmair2007nodal,vassilevski2012parAMG,vassilevski2014construction}.
	
	Preconditioners, and in particular preconditioners based on multilevel decompositions, for \eqref{eq:fraclap-problem}
	have previously been studied.
	For $s = \frac{1}{2}$,  Bramble et al. designed a V-cycle multigrid operator in \cite{bramble1994analysis}. 
	Their construction was based on posing \eqref{eq:fraclap-problem} in the weaker $H^{-1}$ inner product, 
	where the operator they considered had spectral properties suitable for multigrid analysis. 
	In \cite{funken1997bpx}, similar ideas were used to construct and analyze an additive multigrid operator.
	Hierarchical basis preconditioners, suitable for \eqref{eq:fraclap-problem} when $s \in\left(-\frac{3}{2}, \frac{3}{2} \right)$ 
	were constructed in \cite{oswald1998multilevel}. 
	These preconditioners were based on an $L^2$-orthogonal decomposition into each level
	of the grid hierarchy, 
	and thus restricting its use to wavelet spaces where such decompositions are feasible. 
	This was remedied for finite element spaces of low order in \cite{bramble2000computational} 
	by replacing $L^2$-projections onto each level by more cheaply computed operators. 
	In all the preconditioners mentioned above, one drawback is that only simple scaling smoothers can be used, 
	which might be seen as too restrictive. 
	Lastly, in \cite{stevenson2018optimal} the authors constructed optimal auxiliary space preconditioners 
	for \eqref{eq:fraclap-problem}, but they needed to presuppose that a discrete version of $(-\Delta)^{s}$
	was easily computable in the auxiliary space.
	We will in this work not assume such a discrete operator to be at our disposable.
	That is, the proposed preconditioners will not require the computation of $(-\Delta)^{\pm s}$,
	or the fractional power of any positive definite operator for that matter.
	
	The reason for this design choice is that our main motivational application are
	coupled multihysics- and trace constraint problems, where fractional Sobolev spaces
	are part of a well-posed variational formulation, 
	but the fractional Laplacian is absent from the operator characterizing the problem.
	As an illustrative example, let $\Omega$ be a bounded domain $\reals^n$, with $n=2$ or $3$, 
	and $\Gamma$ denotes a structure in $\Omega$ or on its boundary with codimension $1$. 
	Consider the Poisson equation, $-\Delta u = f$ in $\Omega$, 
	with the constraint conditions $u = g$ on $\Gamma$ for given data $f$ and $g$. 
	Imposing the trace constraint weakly, similarly to how it was done in \cite{babuska1973lagrangian}, 
	yields a saddle point system of the form
	\eqn{
		\label{eq:trace-prob-positive}
		\algnd{
			-\Delta u + \adjoint{T}\lambda &= f, & & x \in \Omega \\
			Tu &= g, & & x \in \Gamma
		}
	}
	where $T: H^1(\Omega) \to H^{\frac{1}{2}}(\Gamma)$ is the trace operator. 
	The solution $(u,\lambda)$ is sought in $H^1(\Omega) \times H^{-\frac{1}{2}}(\Gamma)$. 
	Rewriting \eqref{eq:trace-prob-positive} in matrix form, we have
	\eqns{
		\mathcal{A}\begin{pmatrix}
			u \\
			\lambda
		\end{pmatrix}
		= \begin{pmatrix}
			f \\
			g
		\end{pmatrix},
	}
	where $\mathcal{A} = \begin{pmatrix}
	-\Delta & \adjoint{T} \\
	T & 0
	\end{pmatrix}
	$ is an isomorphism from $H^1(\Omega) \times H^{-\frac{1}{2}}(\Gamma)$ to $\dual{\left(H^1(\Omega)\right)} \times H^{\frac{1}{2}}(\Gamma)$.
	By the framework in \cite{mardal2011preconditioning}, 
	a preconditioner for a discretization of \eqref{eq:trace-prob-positive} should be based on a symmetric isomorphism 
	$\mathcal{B}: \dual{\left(H^1(\Omega)\right)} \times H^{\frac{1}{2}}(\Gamma) \to H^1(\Omega) \times H^{-\frac{1}{2}}(\Gamma)$, 
	with the canonical choice being
	\eqn{
		\label{eq:traceprob-canon-precon}
		\mathcal{B} = \begin{pmatrix}
			(I-\Delta)^{-1} & 0 \\
			0 & (-\Delta_\Gamma)^{\frac{1}{2}}
		\end{pmatrix}.
	}
	Cheaply computable operators, spectrally equivalent to $(I-\Delta)^{-1}$ are well known. 
	The second block, $(-\Delta_\Gamma)^{\frac{1}{2}}$ is as such the challenging part when designing preconditioners based on \eqref{eq:traceprob-canon-precon}.
	See also that the fractional Laplacian only appears in $\mathcal{B}$, and not in $\mathcal{A}$.
	
	We remark that even if the above example is relatively simple,
	similar techniques can be used in problems where different PDEs are posed on
	separate domains and linked through some continuity conditions on a
	common interface $\Gamma$.
	One or more of these continuity conditions can then be enforced weakly
	by use of Lagrange multipliers,
	which often will posed in a fractional Sobolev space.
	When preconditioning the resultant system,
	the problem of establishing a computationally feasible operator,
	spectrally equivalent to $(-\Delta_\Gamma)^{\pm\frac{1}{2}}$ persists.
	For instance, in \cite{kuchta2016trace2d} the authors study a multiphysics problem posed 
	on domains of different topological dimension, 
	and continuity is imposed weakly using a Lagrange multiplier. 
	Other applications can be found in \cite{BERTOLUZZA201758}, 
	where the no-slip condition on the surface of a falling body in a fluid is imposed weakly, 
	or in \cite{tveito2017cell}, where the potential jump on a membrane of a cardiac cell is treated similarly. 
	If the embedded structure $\Gamma$ in \eqref{eq:trace-prob-positive} instead has codimension $2$, 
	then numerical experiments in \cite{kuchta2016trace3d} suggests that block diagonal preconditioners 
	where one block is based on $(-\Delta_\Gamma)^{-s}$, with $s\in (-0.2,-0.1)$, 
	provide efficient preconditioners.
	
	The current paper can in a couple of ways be viewed as continuation of \cite{baerland2018multigrid}. 
	Firstly, we define efficient preconditioners for the fractional Laplacian when the exponent $s\in [-1,0]$, 
	complementing the preconditioners introduced in the previous work. 
	Secondly, in this work we generalize the results from \cite{baerland2018multigrid} 
	to positive fractional powers of $\Lambda$. 
	The analysis will aim to substantiate the intuition that if additive multilevel methods are efficient for $s=0$ and $s=1$, 
	then ``by interpolation'' it should be efficient for every $s\in (0,1)$.
	We remark, however, that the analysis on these multilevel methods for
	fractional $H(\div)$ operators assumes certain two-level error estimates
	on $\Lambda^{1-s}$ that will go unproven in this work.
	This is an unsatisfactory state of affairs, 
	but we do give an approach for how these error estimates can be proven,
	as well as motivate their veracity. 
	The techniques we propose will borrow from \cite{bonito2015numerical},
	and would require a substantial additional toolset.
	As such, it is here left as future work.

	The remainder of the current paper is structured as follows. 
	In section \ref{sec:preliminaries} we describe the notation used throughout the paper, 
	as well as give brief introductions to the theory of interpolation spaces 
	and some useful results in functional analysis. 
	Section \ref{sec:precond-negfraclap} is devoted to substantiating the above heuristic argument, 
	and show that provided we are given efficient preconditioners 
	for fractional $H(\div)$ operators with positive exponent, 
	we can construct efficient preconditioners for the fractional Laplacian with negative exponent. 
	Then, in section \ref{sec:decompositions} we propose such preconditioners as additive multigrid operators 
	and give sufficient conditions under which they are efficient. 
	Lastly, in section \ref{sec:numerical_experiments} we provide a series 
	of numerical experiments verifying the theoretical results obtained in this work.

	\section{Preliminaries}
	\label{sec:preliminaries}
	Let $\Omega$ be a bounded, polygonal domain in $\reals^n$, with boundary $\pd \Omega$. We denote by $L^2(\Omega)$ the space of square integrable functions on $\Omega$, with inner product $\inner{\cdot}{\cdot}$, and norm $\norm{\cdot}$. We denote by $H^1(\Omega)$ the usual Sobolev space of functions in $L^2(\Omega)$ with all first-order derivatives also in $L^2(\Omega)$. The closure of smooth functions with compact support in $\Omega$ we denote by $H_0^1(\Omega)$, and its dual space is $H^{-1}(\Omega)$. For $k \in \setof{-1,1}$, the inner product and norm of $H^k(\Omega)$ we denote by $\inner{\cdot}{\cdot}_k$ and $\norm{\cdot}_k$, respectively. Further, we let $H(\div; \Omega)$ denote the Hilbert space of square-integrable vector fields on $\Omega$ with square-integrable divergence, while we write $H(\curl; \Omega)$ to mean the space of square-integrable vector fields on $\Omega$ with square-integrable $\curl$. We let $\Lambda(\cdot,\cdot)$ denote the standard inner product on $H(\div; \Omega)$ defined by
	\eqns{
		\Lambda(\sigma, \tau) = \inner{\sigma}{\tau} + \inner{\div \sigma}{\div \tau}, \quad \sigma, \tau \in H(\div; \Omega).
	}
	
	In general, a Hilbert space $X$ is equipped with an inner product and norm, which we denote by $\inner{\cdot}{\cdot}_X$ and $\norm{\cdot}_X$, respectively, and its dual is denoted by $\dual{X}$. For two Hilbert spaces $X$ and $Y$, we write $\mathcal{L}(X,Y)$ to mean the space of bounded linear operators $T: X \to Y$, which we equip with the usual operator norm
	\eqns{
		\norm{T}_{\mathcal{L}(X,Y)} = \sup_{x \in X}\frac{\norm{Tx}_Y}{\norm{x}_X}.
	}
	
	Let now $A$ be a symmetric positive definite operator on a Hilbert space $X$.
	For sake of simplicity, we assume the spectrum of $A$ to be wholly discrete,
	i.e. $A$ has empty continuous- and residual spectrum.
	Denote by $\setof{(\lambda_k,\phi_k)}_{k=1}^\infty$ the set of eigenpairs of $A$, normalized so that
	\eqns{
		\inner{\phi_k}{\phi_l}_X = \delta_{k,l},
	}
	where $\delta_{k,l}$ is the Kronecker delta.  Then $\phi_k$, for $k=1,2,\ldots$ forms an orthonormal basis of $X$, and if $u \in X$ has the representation $u = \sum_{k=1}^\infty c_k \phi_k$, then
	
	\eqns{
		Au = \sum_{k=1}^\infty \lambda_k c_k \phi_k.
	}
	For $s\in \reals$, we define the fractional power $A^s$ of $A$ by
	\eqns{
		A^s u = \sum_{k=1}^\infty \lambda_k^s c_k \phi_k.
	}
	If $A$ is only positive semi-definite, then we must restrict to $s > 0$.
	If $B$ is another symmetric positive semi-definite operator on $X$, we write $A \leq B$ if for every $u \in X$
	\eqns{
		\inner{Au}{u}_X \leq \inner{Bu}{u}_X
	}
	holds. Note that $A \geq 0$ is equivalent to saying that $A$ is positive semi-definite. In addition, we shall write $A \leq 1$ to mean that $\inner{Au}{u}_X \leq \inner{u}{u}_X$ for every $u \in X$.
	
	A result in operator theory is the Löwner-Heinz inequality, which in our case states that if $A \leq B$, then
	\eqn{
		\label{eq:Loewner-Heinz}
		A^s \leq B^s, \quad s\in [0,1],
	}
	cf. for instance \cite{kato1952notes}. Inequality \eqref{eq:Loewner-Heinz} means that the function $x^s$ with $x\in [0,\infty)$ is operator monotone for $s\in[0,1]$.
	It follows that  $-(x)^s$ is operator convex (cf. \cite[Thm. 2.1 and 2.5]{hansen1982monotone}), that is, for any two symmetric positive semi-definite operators $A$ and $B$ on a Hilbert space $X$, the inequality
	\eqns{
		\label{eq:operator-convex-def}
		\lambda A^s + (1-\lambda)B^s \leq \left(\lambda A + (1-\lambda)B \right)^s
	}
	holds for every $\lambda \in [0,1]$.
	A key result regarding operator convex functions is the Jensen's operator inequality (cf. \cite[Theorem 2.1]{hansen2003jensen}). 
	The version we will use in the current work states that for any bounded, symmetric positive semi-definite operator $A$ on $X$, and $P: X \to X$ so that $\adjoint{P}P \leq 1$
	\eqn{
		\label{eq:jensen-inequality}
		\adjoint{P}A^s P \leq \left( \adjoint{P}A P \right)^s.
	}
	We will at numerous times in this paper be in a position where we want to use \eqref{eq:jensen-inequality}, but where $P$ is a contraction between different Hilbert spaces. Thus, we make the following slight generalization of \eqref{eq:jensen-inequality}.
	\begin{lemma}
		\label{lem:gen-jensen-inequality}
		Let $X_1$ and $X_2$ be two Hilbert spaces, and $T: X_1 \to X_2$ an operator satisfying $\adjoint{T}T \leq 1$ on $X_1$.
		Further, assume that $A$ is a bounded, symmetric positive semi-definite operator on $X_2$. Then 
		\eqn{
			\label{eq:gen-jensen-inequality}
			\adjoint{T}A^s T \leq \left( \adjoint{T} A T \right)^s
		} 
		for every $s \in [0,1]$.
	\end{lemma}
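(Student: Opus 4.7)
The plan is to reduce the inequality to the original operator Jensen inequality~\eqref{eq:jensen-inequality} by lifting $T$, $A$, and the identity on $X_1$ to operators on the common Hilbert space $X = X_1 \oplus X_2$. On this direct sum, all three live on the same space, and the hypothesis becomes the usual contraction condition needed to invoke \eqref{eq:jensen-inequality}.

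Concretely, I would define
\eqns{
\tilde A = \begin{pmatrix} 0 & 0 \\ 0 & A \end{pmatrix}, \qquad
\tilde T = \begin{pmatrix} 0 & 0 \\ T & 0 \end{pmatrix},
}
both regarded as operators on $X = X_1 \oplus X_2$. Then $\tilde A$ inherits boundedness and positive semi-definiteness from $A$, and a short block computation gives
\eqns{
\adjoint{\tilde T}\tilde T = \begin{pmatrix} \adjoint{T}T & 0 \\ 0 & 0 \end{pmatrix} \leq 1 \quad \text{on } X,
}
using the assumption $\adjoint{T}T \leq 1$ on $X_1$. Similarly,
\eqns{
\adjoint{\tilde T}\tilde A \tilde T = \begin{pmatrix} \adjoint{T}AT & 0 \\ 0 & 0 \end{pmatrix}, \qquad
\adjoint{\tilde T}\tilde A^s \tilde T = \begin{pmatrix} \adjoint{T}A^sT & 0 \\ 0 & 0 \end{pmatrix},
}
the second identity because $\tilde A$ is block-diagonal, so $\tilde A^s$ just applies the fractional power blockwise and the $X_1$-block contributes nothing.

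Applying \eqref{eq:jensen-inequality} with $P = \tilde T$ and $A$ replaced by $\tilde A$ on $X$ yields
\eqns{
\begin{pmatrix} \adjoint{T}A^sT & 0 \\ 0 & 0 \end{pmatrix} \leq \left(\begin{pmatrix} \adjoint{T}AT & 0 \\ 0 & 0 \end{pmatrix}\right)^s = \begin{pmatrix} (\adjoint{T}AT)^s & 0 \\ 0 & 0 \end{pmatrix},
}
since fractional powers of a block-diagonal positive semi-definite operator are computed blockwise (this requires a small observation: $0^s = 0$ on the trivial block, and the spectral calculus on the $X_1$-block of $\adjoint{\tilde T}\tilde A \tilde T$ is exactly that of $\adjoint{T}AT$). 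Restricting the resulting inequality to the first component of $X$ gives \eqref{eq:gen-jensen-inequality}.

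The only mild subtlety is the blockwise fractional power claim, which I expect to be the main obstacle worth spelling out carefully: one needs that $\tilde A$'s spectral decomposition splits along the direct sum, so that $\tilde A^s$ really is block-diagonal with $A^s$ in the lower block, and likewise that $(\adjoint{\tilde T}\tilde A \tilde T)^s$ acts as $(\adjoint T A T)^s$ on the $X_1$-component. Both follow immediately from the functional calculus for block-diagonal self-adjoint operators, so the argument is essentially a bookkeeping exercise once the lifting is set up.
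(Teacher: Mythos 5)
Your proposal is correct and matches the paper's proof essentially verbatim: the paper also lifts to $X = X_1 \oplus X_2$, defines the same block operators $P$ and $\tilde A$, checks $\adjoint{P}P \leq 1$, and applies the standard Jensen operator inequality \eqref{eq:jensen-inequality}. Your extra remark about the blockwise functional calculus is a sound observation the paper leaves implicit.
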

	\begin{proof}
		See that \eqref{eq:gen-jensen-inequality} holds for $s=0$ and $s=1$, so fix $s \in (0,1)$. We define the auxiliary Hilbert space $X = X_1 \oplus X_2$, with inner product inherited from the inner products on $X_1$ and $X_2$. Now, define linear operators $P$ and $\tilde{A}$ on $X$ as
		\eqns{
			P = \begin{pmatrix}
				0 & 0 \\
				T & 0 
			\end{pmatrix},
			\quad \text{ and } \quad
			\tilde{A} = \begin{pmatrix}
				0 & 0 \\
				0 & A
			\end{pmatrix}. 
		}
		A simple calculation then shows that
		\eqns{
			\adjoint{P}P = \begin{pmatrix}
				\adjoint{T}T & 0 \\
				0 & 0
			\end{pmatrix}
			\leq 1,
		}
		by the assumption on $T$. Similarly,
		\eqns{
			\adjoint{P}\tilde{A}^\theta P = \begin{pmatrix}
				\adjoint{T}A^\theta T & 0 \\
				0 & 0
			\end{pmatrix}
		}
		for every $ \theta > 0$. Then, we have from the standard Jensen's inequality in \eqref{eq:jensen-inequality} that
		\eqns{
			\begin{pmatrix}
				\adjoint{T}A^s T & 0 \\
				0 & 0 
			\end{pmatrix}
			= \adjoint{P}\tilde{A}^s P \leq \left( \adjoint{P}\tilde{A} P \right)^s
			= \begin{pmatrix}
				\left( \adjoint{T} A T \right)^s & 0 \\
				0 & 0
			\end{pmatrix}.
		}
		In particular, $\adjoint{T}A^s T \leq \left( \adjoint{T} A T \right)^s$, which completes the proof.
	\end{proof}
	
	\subsection{Interpolation spaces}
	In defining fractional Sobolev spaces and fractional $H(\div)$ spaces, we will use some results from interpolation theory, as presented in \cite{lions1972nonhom}, and so we shall make a quick review.
	
	Let $X$ and $Y$ be separable Hilbert spaces with inner products $\inner{\cdot}{\cdot}_X$ and $\inner{\cdot}{\cdot}_Y$, and corresponding norms $\norw{\cdot}{X}$ and $\norw{\cdot}{Y}$, respectively. 
	Furthermore, we assume that $X \subset Y$, with $X$ dense in $Y$ and continuous injection.
	In this case we call $X$ and $Y$ compatible.
	
	Denote by $D(A)$ the set of $u\in Y$ so that the linear form 
	\eqns{
		L_u(v) = \inner{u}{v}_X \quad v\in X
	}
	is continuous in $Y$. Following the discussion in \cite{lions1972nonhom}, we note that $D(A)$ is dense in $Y$.
	Using Riesz' representation theorem, there is a $w \in Y$ so that
	\eqns{
		\inner{w}{v}_Y = \inner{u}{v}_X.
	}
	The mapping $u \mapsto w$ defines an unbounded linear operator $A: D(A) \to Y$, which is defined by
	\eqn{
		\label{eq:A-def}
		\inner{Au}{v}_Y = \inner{u}{v}_X.
	}
	Clearly, $A$ is self-adjoint and positive. Using the spectral decomposition of self-adjoint operators, we may define the powers, $A^\theta$, $\theta \in \reals$, of $A$.
	We define interpolation spaces in the following way:
	\begin{mydef}
		\label{def:abstract-interpolation-space}
		Let $X$ and $Y$ satisfy the above assumptions. For $\theta \in [0,1]$ we define the interpolation space
		\eqn{
			\label{eq:intspace_def1}
			\interpolate{Y}{X}{\theta} = D(A^{\frac{\theta}{2}}) = \setof{u\in Y \sothat A^{\frac{\theta}{2}}u \in Y }
		}
		with norm given by the graph norm 
		\eqn{
			\label{eq:intspace_def2}
			\norm{u}_{\interpolate{Y}{X}{\theta}} := \left(\norw{u}{Y}^2 + \inner{A^\theta u}{u}_Y\right)^{\frac{1}{2}}.
		}
	\end{mydef}
	It follows by the definition that
	\eqns{
		\interpolate{Y}{X}{0} = Y, \text{ and } \interpolate{Y}{X}{1} = X.
	}
	The following is a key Theorem in interpolation theory.
	\begin{theorem}
		\label{thm:operator-interpolation}
		Let $\{X,Y\}$ and $\{\mathcal{X}, \mathcal{Y}\}$ be two pairs of compatible Hilbert spaces. 
		Further, let $T$ be a continuous operator $\mathcal{L}(X,\mathcal{X}) \cap \mathcal{L}(Y, \mathcal{Y})$, so that
		\algns{
			\norw{Tu}{\mathcal{X}} &\leq M_0\norw{u}{X}, \\
			\norw{Tu}{\mathcal{Y}} &\leq M_1\norw{u}{Y}.
		}
		Then $T \in \mathcal{L}(\interpolate{Y}{X}{\theta}, \interpolate{\mathcal{Y}}{\mathcal{X}}{\theta})$, and
		\eqn{
			\label{eq:operator-interpolation}
			\norw{Tu}{\interpolate{\mathcal{Y}}{\mathcal{X}}{\theta}} \leq CM_0^{1-\theta}M_1^{\theta}\norw{u}{\interpolate{Y}{X}{\theta}},
		}
		where $C$ is a constant independent of $T$, $\mathcal{X}$, and $\mathcal{Y}$.
	\end{theorem}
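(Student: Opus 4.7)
The plan is to translate the two endpoint boundedness assumptions into operator inequalities on $Y$ and then interpolate between them using the L\"owner--Heinz inequality \eqref{eq:Loewner-Heinz} together with the generalized Jensen inequality of Lemma~\ref{lem:gen-jensen-inequality}. Let $A$ and $\mathcal{A}$ be the positive self-adjoint operators associated with the pairs $\{Y,X\}$ and $\{\mathcal{Y},\mathcal{X}\}$ through \eqref{eq:A-def}, so that $\norw{u}{X}^2=\inner{Au}{u}_Y$ and $\norw{v}{\mathcal{X}}^2=\inner{\mathcal{A}v}{v}_{\mathcal{Y}}$ on the appropriate dense subspaces, and the interpolation norms of Definition~\ref{def:abstract-interpolation-space} read $\norw{u}{\interpolate{Y}{X}{\theta}}^2=\norw{u}{Y}^2+\inner{A^\theta u}{u}_Y$ with an analogous expression for $\interpolate{\mathcal{Y}}{\mathcal{X}}{\theta}$. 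In this language the two hypotheses become the quadratic-form inequalities
\begin{equation*}
\adjoint{T}T\le M_1^2\,I\quad\text{on }Y,\qquad \adjoint{T}\mathcal{A}T\le M_0^2\,A\quad\text{on }D(A^{1/2})=X,
\end{equation*}
so it suffices to produce an interpolated operator bound on $\adjoint{T}\mathcal{A}^\theta T$.

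For this, rescale to $\widetilde T=T/M_1$, giving $\adjoint{\widetilde T}\widetilde T\le 1$, and apply Lemma~\ref{lem:gen-jensen-inequality} with $\mathcal{A}$ playing the role of $A$ in that lemma. The result is $\adjoint{\widetilde T}\mathcal{A}^\theta\widetilde T\le\bigl(\adjoint{\widetilde T}\mathcal{A}\widetilde T\bigr)^\theta$, i.e., $\adjoint{T}\mathcal{A}^\theta T\le M_1^{2(1-\theta)}\bigl(\adjoint{T}\mathcal{A}T\bigr)^\theta$. Then \eqref{eq:Loewner-Heinz} applied to $\adjoint{T}\mathcal{A}T\le M_0^2A$ gives $\bigl(\adjoint{T}\mathcal{A}T\bigr)^\theta\le M_0^{2\theta}A^\theta$, and combining with the previous inequality yields $\adjoint{T}\mathcal{A}^\theta T\le M_0^{2\theta}M_1^{2(1-\theta)}A^\theta$. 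Substituting this back into $\norw{Tu}{\interpolate{\mathcal{Y}}{\mathcal{X}}{\theta}}^2=\norw{Tu}{\mathcal{Y}}^2+\inner{\adjoint{T}\mathcal{A}^\theta Tu}{u}_Y$ and using $\norw{Tu}{\mathcal{Y}}^2\le M_1^2\norw{u}{Y}^2$ produces the desired estimate \eqref{eq:operator-interpolation}.

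The principal technical obstacle is that both L\"owner--Heinz and Lemma~\ref{lem:gen-jensen-inequality} are formulated for bounded positive semi-definite operators, whereas $A$ and $\mathcal{A}$ are in general unbounded. I would handle this by spectral truncation: introduce $\mathcal{A}_N=\mathcal{A}\,E_{\mathcal{A}}([0,N])$ and $A_N=A\,E_A([0,N])$, run the entire chain of inequalities above for the bounded truncations, and pass to the limit $N\to\infty$ via monotone convergence of the quadratic forms $\inner{\mathcal{A}_N^\theta Tu}{Tu}_{\mathcal{Y}}\nearrow\inner{\mathcal{A}^\theta Tu}{Tu}_{\mathcal{Y}}$ and $\inner{A_N^\theta u}{u}_Y\nearrow\inner{A^\theta u}{u}_Y$ on $D(A^{\theta/2})$. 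A short density argument is additionally required to promote the form inequality $\adjoint{T}\mathcal{A}T\le M_0^2A$ from $D(A)$, where it is immediate from the hypothesis, to its form domain $D(A^{1/2})=X$, which is the natural habitat of the interpolation norms. Once these technical points are discharged, the stated bound follows directly from the spectral representation of the interpolation norms.
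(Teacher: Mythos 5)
The paper itself does not prove this theorem; it is quoted from Lions and Magenes \cite{lions1972nonhom}, so there is no in-paper argument to compare against. Your route via the generalized Jensen inequality (Lemma~\ref{lem:gen-jensen-inequality}) and L\"owner--Heinz is a legitimate operator-theoretic proof, different in flavor from the usual complex-interpolation (three-lines) argument, and the chain $\adjoint{T}\mathcal{A}^\theta T\le M_1^{2(1-\theta)}\bigl(\adjoint{T}\mathcal{A}T\bigr)^\theta\le M_0^{2\theta}M_1^{2(1-\theta)}A^\theta$ is correct for the quadratic forms. Note in passing that your derivation produces the exponent pattern $M_0^{\theta}M_1^{1-\theta}$, not the $M_0^{1-\theta}M_1^{\theta}$ written in the theorem; since Definition~\ref{def:abstract-interpolation-space} sets $\interpolate{Y}{X}{1}=X$ and the hypothesis attaches $M_0$ to the $X$-bound, your exponents are the ones that match the endpoints $\theta=0,1$, and the statement as printed has the roles of $M_0$ and $M_1$ swapped.

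The genuine gap is in the very last step, when you add $\norw{Tu}{\mathcal{Y}}^2\le M_1^2\norw{u}{Y}^2$ to the seminorm inequality and declare the estimate done. Because the interpolation norm of Definition~\ref{def:abstract-interpolation-space} is the non-homogeneous graph norm $\norw{u}{Y}^2+\inner{A^\theta u}{u}_Y$, the two pieces you control carry different prefactors, $M_1^2$ and $M_0^{2\theta}M_1^{2(1-\theta)}$, and when $M_1>M_0$ their sum is dominated by $M_1^2$, which is not bounded by $CM_0^{2\theta}M_1^{2(1-\theta)}$ with an absolute $C$. To close this you need one more observation: under the standard normalization of the dense continuous embedding $\mathcal{X}\hookrightarrow\mathcal{Y}$ (so that $\norw{v}{\mathcal{Y}}\le\norw{v}{\mathcal{X}}$, equivalently $\mathcal{A}\ge 1$ on its form domain), the spectral calculus gives $\norw{Tu}{\mathcal{Y}}^2=\inner{\mathcal{A}^0Tu}{Tu}_{\mathcal{Y}}\le\inner{\mathcal{A}^\theta Tu}{Tu}_{\mathcal{Y}}$, and then both contributions to $\norw{Tu}{\interpolate{\mathcal{Y}}{\mathcal{X}}{\theta}}^2$ are controlled by the single interpolated quantity $\inner{\mathcal{A}^\theta Tu}{Tu}_{\mathcal{Y}}$, yielding the bound with $C=\sqrt 2$. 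Without stating this normalization (or an equivalent homogeneity device), the last line of the argument does not follow from what precedes it. The spectral-truncation remedy you outline for the unboundedness of $A$ and $\mathcal{A}$ is the right idea and, once the above point is inserted, the proof is sound.
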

	
	If we now make the identification $Y = \dual{Y}$, then $Y \subset \dual{X}$ is dense, with continuous embedding. Thus,
	the interpolation space $\interpolate{\dual{X}}{Y}{\theta}$ is well-defined for $\theta \in [0,1]$ according to definition \ref{def:abstract-interpolation-space}. Moreover, we have that (cf. \cite[Thm. 6.2]{lions1972nonhom})
	\eqn{
		\label{eq:interpolation-duality}
		\interpolate{\dual{X}}{Y}{\theta} = \dual{\interpolate{Y}{X}{1-\theta}}.
	}
	
	It is well-known that $H^1(\Omega)$ is densely and continuously embedded in $L^2(\Omega)$, which implies that we can define the fractional Sobolev spaces $H^s(\Omega)$ for $s\in [0,1]$ as
	\eqns{
		H^s(\Omega) := \interpolate{L^2(\Omega)}{H^1(\Omega)}{s}
	}
	We go on to define $H^s_0(\Omega)$ as the closure in $H^s(\Omega)$ of smooth and compactly supported functions on $\Omega$, while for $s \in [-1,0]$, we define 
	\eqns{
		H^s(\Omega) = \dual{H^{-s}_0(\Omega)}
	}
	We note that this definition for negative fractional Sobolev spaces is equivalent to interpolation between $H^{-1}(\Omega)$ and $L^2(\Omega)$.
	
	Similarly, we define the fractional $H(\div;\Omega)$ space as
	\eqn{
		\label{eq:fractional-hdiv-def}
		H^s(\div; \Omega) := \interpolate{L^2(\Omega)}{H(\div; \Omega)}{s}.
	}

	\subsection{Discrete interpolation spaces}
	The discrete variant of fractional operators can be constructed analogously to the continuous setting. Suppose $X_h \subset X$ is a finite-dimensional subspace. We can define the operator $A_h: X_h \to X_h$ by
	\eqns{
		\inner{A_h v}{w}_Y = \inner{v}{w}_X.
	}
	We note that because $X_h$ is finite-dimensional, all norms are equivalent, and in particular, $A_h$ is a bounded operator.
	Since $A_h$ is SPD, we can define its fractional powers $A_h^\theta$ for $\theta \in \reals$, and discrete fractional norms $\norm{\cdot}_{\theta,h}^2 := \inner{A_h^\theta \cdot}{\cdot}$. When $\theta = 0$ and $\theta = 1$, the norm $\norm{\cdot}_{\theta,h}$ coincides with the $Y$- and $X$ norm, respectively. Furthermore, for $\theta \in (0,1)$ the discrete norm is equivalent to the $\interpolate{Y}{X}{\theta}$ norm, with constants of equivalence independent of $X_h$ (cf. \cite[Proposition 3.2]{arioli2009discrete})
	
	Suppose now that we have an additional finite-dimensional subspace $X_H \subset X_h$. Analogously to before we can define the SPD operator $A_H: X_H \to X_H$, and its fractional powers $A_H^\theta$, with $\theta \in \reals$. In the case of $\theta = 0$ or $\theta = 1$ we have that
	\eqns{
		\inner{A_H^\theta v}{w}_Y = \inner{A_h^\theta v}{w}_Y, \quad v,w \in X_H.
	}
	However, this inheritance of bilinear forms fails when $\theta \in (0,1)$. 
	Getting ahead of ourselves, the inheritance of bilinear forms is a common 
	assumption in the design and analysis of multigrid algorithms. 
	Therefore, that the inheritance fails to hold when $\theta \in (0,1)$ can be detrimental. 
	The following lemma shows that we are able to recover one of the key inequalities used in \cite{bramble1991analysis} in the analysis of multigrid algorithms on non-inherited bilinear forms.
	
	\begin{lemma}
		\label{lem:noninheritance-inequality}
		Let $\theta \in [0,1]$. We have that restricted to $X_H$
		\eqns{
			A^\theta_h \leq A^\theta_H.
		}
		That is, for every $v \in X_H$
		\eqn{
			\label{eq:abstract-noninheritance-inequality}
			\inner{A^\theta_h v}{v}_Y \leq \inner{A^\theta_H v}{v}_Y.
		}
	\end{lemma}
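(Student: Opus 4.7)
The plan is to obtain this as a direct application of the generalized Jensen's operator inequality in Lemma~\ref{lem:gen-jensen-inequality}, taking the natural inclusion $T: X_H \hookrightarrow X_h$ as the contraction. Crucially, both $X_H$ and $X_h$ are equipped with the ambient $Y$-inner product, which is the one with respect to which $A_h$ and $A_H$ are symmetric positive definite and their fractional powers are spectrally defined. With this choice, $T$ is an isometric embedding, so $\adjoint{T}T = I_{X_H}$ and in particular $\adjoint{T}T \leq 1$, as required for Lemma~\ref{lem:gen-jensen-inequality}. Both $A_h$ and $A_H$ are bounded (being operators on finite-dimensional spaces) and positive definite, so the hypotheses of the lemma are satisfied.

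The key algebraic step is to identify $\adjoint{T}A_h T$ as an operator on $X_H$. For $v, w \in X_H$, the defining identity of $A_h$ gives $\inner{\adjoint{T}A_h T v}{w}_Y = \inner{A_h v}{w}_Y = \inner{v}{w}_X$, while the defining identity of $A_H$ gives $\inner{A_H v}{w}_Y = \inner{v}{w}_X$. Hence $\adjoint{T}A_h T = A_H$ on $X_H$. Applying Lemma~\ref{lem:gen-jensen-inequality} therefore yields
\eqns{
    \adjoint{T} A_h^\theta T \leq \left(\adjoint{T} A_h T\right)^\theta = A_H^\theta
}
for every $\theta \in [0,1]$. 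Testing this inequality against $v \in X_H$ and noting that $\inner{\adjoint{T}A_h^\theta T v}{v}_Y = \inner{A_h^\theta v}{v}_Y$ gives exactly \eqref{eq:abstract-noninheritance-inequality}.

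There is essentially no obstacle once the correct setup is in place; the substance of the lemma is that the generalized Jensen's inequality from the previous lemma applies here, not any new operator-theoretic input. The only subtle point is to be careful about which inner product the subspaces carry: choosing the $Y$-inner product (rather than the $X$-inner product) is what makes the inclusion a contraction, and it is also the inner product for which the definition of fractional powers has been set up earlier in the section. The endpoint cases $\theta = 0$ and $\theta = 1$ can be noted as immediate sanity checks, since at $\theta = 0$ the inequality reduces to $\inner{v}{v}_Y = \inner{v}{v}_Y$ and at $\theta = 1$ to $\inner{v}{v}_X = \inner{v}{v}_X$, with equality in both cases.
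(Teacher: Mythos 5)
Your proof is correct and takes essentially the same route as the paper: both treat the inclusion $X_H \hookrightarrow X_h$ as a contraction with respect to the $Y$-inner product, observe that conjugating $A_h$ by it recovers $A_H$, and then invoke Lemma~\ref{lem:gen-jensen-inequality}. The only differences are cosmetic (notation $T$ versus $I_H$, and you spell out the verification that $\adjoint{T}A_h T = A_H$ via the defining bilinear forms, which the paper states as an observation).
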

	\begin{proof}
		As already noted, for $\theta=0$ and $\theta=1$ \eqref{eq:abstract-noninheritance-inequality} holds with equality, so for the remainder of the proof let $0 < \theta < 1$.
		
		Let $I_H: X_H \to X_h$ be the inclusion operator, and $\adjoint{I_H}$ its adjoint with respect to the $Y$-inner product. Then, $\adjoint{I_H}I_H$ is the identity on $X_H$, so $\adjoint{I_H}I_H \leq 1$ holds trivially. 
		By Lemma \ref{lem:gen-jensen-inequality}, we thus have that
		\eqn{
			\label{eq:noninheritance-inequality-proof1}
			\adjoint{I_H}A_h^\theta I_H \leq \left( \adjoint{I_H} A_h I_H\right)^\theta.
		}
		The result follows from \eqref{eq:noninheritance-inequality-proof1} and the observation that $A_H = \adjoint{I_H}A_h I_H$.
	\end{proof}
	
	\section{Preconditioner for fractional Laplacian}
	\label{sec:precond-negfraclap}
	In this section we will establish a way to construct preconditioners for $(-\Delta)^{-s}$ when $s \in [0,1]$. We will begin by first considering the continuous setting, which will motivate the construction of preconditioners for a discretization of $(-\Delta)^{-s}$.
	We define $-\Delta: H_0^1(\Omega) \to H^{-1}(\Omega)$ by
	\eqns{
		\inner{(-\Delta) u}{v} = \inner{\grad u}{\grad v}, \quad u,v \in H_0^1(\Omega).
	}
	In view of the interpolation theory discussed in the previous section, 
	it is evident that $(-\Delta)^s$ is well-defined for any $s \in [0,1]$, 
	and it is an isomorphism from $H_0^s(\Omega)$ to $H^{-s}(\Omega)$. 
	We denote its inverse by $(-\Delta)^{-s}$, 
	and consider the problem of finding $u \in H^{-s}(\Omega)$ so that
	\eqn{
		\label{eq:continuous-fraclap-problem}
		(-\Delta)^{-s} u = f,
	}
	for a given $f \in H_0^s(\Omega)$. 
	To precondition \eqref{eq:continuous-fraclap-problem}, 
	we seek a self-adjoint isomorphism $B^s: H_0^s(\Omega) \to H^{-s}(\Omega)$, 
	so that 
	\eqn{
		\label{eq:negFracLap-cont-speceq}
		C_1 \norm{u}_{H^{-s}(\Omega)} \leq \inner{B^s u}{u} \leq C_2 \norm{u}_{H^{-s}(\Omega)}
	}
	for some constant $C_1, C_2 > 0$.
	
	Now, consider the gradient operator, $\grad$. It is clear that $\grad \in \mathcal{L}(H_0^1(\Omega),L^2(\Omega))$. On $L^2(\Omega)$, we define 
	\eqns{
		\inner{\grad u}{\tau} = - \inner{u}{\div \tau}, \quad u \in L^2(\Omega), \, \tau \in H(\div;\Omega).
	}
	Using integration by parts, this reduces to the standard $\grad$ when $u \in H_0^1(\Omega)$. 
	Moreover, we have that
	\eqns{
		\norm{\grad u}_{\dual{H(\div,\Omega)}} = \sup_{\tau \in H(\div; \Omega)}\frac{\inner{u}{\div \tau}}{\norm{\tau}_{H(\div;\Omega)}} \leq \norm{u}.
	}
	Thus,
	\eqns{
		\grad \in \mathcal{L}(H_0^1(\Omega),L^2(\Omega)) \cap \mathcal{L}(L^2(\Omega), \dual{H(\div; \Omega)}),
	}
	and Theorem \ref{thm:operator-interpolation} then implies that $\grad \in \mathcal{L}\left(H_0^s(\Omega), \interpolate{\dual{H(\div;\Omega)}}{L^2(\Omega)}{s}\right)$. In view of \eqref{eq:interpolation-duality} and \eqref{eq:fractional-hdiv-def} we can rewrite this as
	\eqn{
		\label{eq:grad-interpolation}
		\grad \in \mathcal{L}(H_0^s(\Omega), \dual{H^{1-s}(\div;\Omega)}). 
	}
	
	Suppose now that we are given a self-adjoint isomorphism $B_{\div}^{1-s}: \dual{H^{1-s}(\div; \Omega)} \to H^{1-s}(\div; \Omega)$ which for every $\tau \in \dual{H^{1-s}(\div; \Omega)}$ satisfies
	\eqn{
		\label{eq:cont-fracHdiv-speceq}
		C_{d,1} \norm{\tau}_{\dual{H^{1-s}(\div; \Omega)}}^2 \leq \inner{B_{\div}^{1-s} \tau}{\tau} \leq C_{d,2} \norm{\tau}_{\dual{H^{1-s}(\div; \Omega)}}^2
	}
	for some constants $C_{d,1}, C_{d,2} > 0$ independent of $\tau$. We then define
	\eqn{
		\label{eq:auxSpace-PC-cont-def}
		B^s = \adjoint{\grad} B^{1-s}_{\div} \grad.
	}
	
	Our aim is to show that $B^s$ defined by \eqref{eq:auxSpace-PC-cont-def} satisfies \eqref{eq:negFracLap-cont-speceq}.
	We begin by observing that $B^s$ is self-adjoint and maps elements from $H_0^s(\Omega)$ to $H^{-s}(\Omega)$.
	Moreover, the mapping property of $\grad$ in \eqref{eq:grad-interpolation} and the boundedness of $B^{1-s}_{\div}$
	imply that $B^s \in \mathcal{L}(H_0^s(\Omega), H^{-s}(\Omega))$.
	
	Establishing the lower bound of \eqref{eq:negFracLap-cont-speceq} is more difficult
	in that we want to interpolate between lower bounds on the gradient operator.
	However, Theorem \ref{thm:operator-interpolation} is not applicable in this setting.
	To overcome this problem, we will interpolate between bounds on a left-inverse, $T$, of $\grad$.
	In this work, we employ the Bogovski\u{\i} operator established in \cite{costabel2010bogovskii}.
	If $\Omega$ is star-shaped with respect to an open ball $B$, $T$ takes for a vector field $\tau$
	the explicit form
	\eqns{
		T\tau(x) = \int_{\Omega}K(x,y) (x-y)\cdot \tau(y) \intd y, \quad \text{ where } K(x,y) = \int_1^{\infty}(t-1)^{n-1}\theta(y+t(x-y))\intd t.
	}
	Here, $\theta \in \mathcal{C}_0^\infty(\reals^n)$ with support contained in $B$ and integrates to $1$.
	It can be checked that $T$ is a left-inverse of $\grad$, and satisfies
	\eqn{
		\label{eq:bogovskii-mapping-properties}
		T \in \mathcal{L}(L^2(\Omega), H_0^1(\Omega)) \cap \mathcal{L}(\dual{H(\div; \Omega)}, L^2(\Omega)),
	}
	see \cite[Cor. 3.4]{costabel2010bogovskii}.
	We note that the definition of $T$ can be extended to general Lipschitz domains ---
	as such domains are finite unions of star-shaped domains ---
	with the same mapping properties.
	From \eqref{eq:bogovskii-mapping-properties} and Theorem \ref{thm:operator-interpolation} we have that
	\eqn{
		\label{eq:bogovskii-interpolation}
		T \in \mathcal{L}(\dual{H^{1-s}(\div;\Omega)}, H_0^s(\Omega)).
	}
	
	Finally, we are in a position to prove that $B^s$ satisfies \eqref{eq:negFracLap-cont-speceq}, and hence is a suitable preconditioner for \eqref{eq:continuous-fraclap-problem}. 
	The result is stated in the following theorem.
	\begin{theorem}
		Let $s \in [0,1]$, and $B^{1-s}_{\div}: \dual{H^{1-s}(\div; \Omega)} \to H^{1-s}(\div; \Omega)$ satisfy \eqref{eq:cont-fracHdiv-speceq}. Then $B^s$ defined by \eqref{eq:auxSpace-PC-cont-def} satisfies \eqref{eq:negFracLap-cont-speceq} with
		\eqn{
			\label{eq:cont-negFracLap-constants}
			C_1 = C_{d,1}\norm{T}_{\mathcal{L}(\dual{H^{1-s}(\div;\Omega)}, H_0^s(\Omega))}^{-2} ,\quad \text{ and } \quad C_2 =  C_{d,2}\norm{\grad}_{\mathcal{L}(H_0^s(\Omega), \dual{H^{1-s}(\div;\Omega)})}^{2}.
		}
	\end{theorem}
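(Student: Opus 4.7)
The plan is to establish the two bounds in \eqref{eq:negFracLap-cont-speceq} separately by unpacking the definition $B^s = \adjoint{\grad} B^{1-s}_{\div} \grad$ and exploiting the already-verified mapping properties of $\grad$ and of the Bogovski\u{\i} left-inverse $T$. In both directions the starting point is the identity $\inner{B^s u}{u} = \inner{B^{1-s}_{\div} \grad u}{\grad u}$, immediate from \eqref{eq:auxSpace-PC-cont-def} together with the definition of the adjoint (and justified by the mapping property \eqref{eq:grad-interpolation}).

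The upper bound is routine: applying the right inequality of \eqref{eq:cont-fracHdiv-speceq} followed by the continuity of $\grad$ recorded in \eqref{eq:grad-interpolation} gives
\[
\inner{B^s u}{u} \leq C_{d,2}\,\norm{\grad u}_{\dual{H^{1-s}(\div;\Omega)}}^{2} \leq C_{d,2}\,\norm{\grad}_{\mathcal{L}(H_0^s(\Omega),\dual{H^{1-s}(\div;\Omega)})}^{2}\,\norm{u}_{H_0^s(\Omega)}^{2},
\]
which is precisely the constant $C_2$ displayed in \eqref{eq:cont-negFracLap-constants}.

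The lower bound is the nontrivial half and is where the Bogovski\u{\i} operator earns its keep. What one would like is a reverse estimate $\norm{\grad u}_{\dual{H^{1-s}(\div;\Omega)}} \gtrsim \norm{u}_{H_0^s(\Omega)}$, but Theorem \ref{thm:operator-interpolation} interpolates only upper bounds, so no such estimate follows directly from the two endpoint inequalities for $\grad$. The trick is to transfer the burden to the left inverse $T$: since $T\grad u = u$ on $H_0^s(\Omega)$, the mapping property \eqref{eq:bogovskii-interpolation} yields
\[
\norm{u}_{H_0^s(\Omega)} = \norm{T \grad u}_{H_0^s(\Omega)} \leq \norm{T}_{\mathcal{L}(\dual{H^{1-s}(\div;\Omega)}, H_0^s(\Omega))}\,\norm{\grad u}_{\dual{H^{1-s}(\div;\Omega)}}.
\]
Squaring this and combining with the left inequality of \eqref{eq:cont-fracHdiv-speceq} produces the claimed $C_1$. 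The main obstacle, already absorbed by the preceding setup, is thus precisely the unavailability of a direct interpolation for the coercivity of $\grad$; the Bogovski\u{\i} operator supplies exactly the missing ingredient via its two endpoint continuities \eqref{eq:bogovskii-mapping-properties}, and beyond that no further machinery is required.
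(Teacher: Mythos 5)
Your proof is correct and follows essentially the same route as the paper: both directions reduce to the identity $\inner{B^s u}{u} = \inner{B^{1-s}_{\div}\grad u}{\grad u}$, the upper bound comes from \eqref{eq:cont-fracHdiv-speceq} plus the interpolated boundedness of $\grad$, and the lower bound from \eqref{eq:cont-fracHdiv-speceq} plus the interpolated boundedness of the Bogovski\u{\i} left inverse $T$ via $T\grad = I$. Your remark about why the left inverse is needed (interpolation giving only upper bounds) matches the paper's own motivating discussion preceding the theorem.
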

	\begin{proof}
		Fix $s \in [0,1]$, and take any $u \in H_0^s(\Omega)$. From the definition of $B^s$, see that
		\eqns{
			\inner{B^s u}{u} = \inner{B^{1-s}_{\div} \grad u}{\grad u}. 
		}
		From the second inequality of \eqref{eq:cont-fracHdiv-speceq} and the mapping property of $\grad$ in \eqref{eq:grad-interpolation} we deduce that
		\eqns{
			\inner{B^s u}{u} \leq C_{d,2}\norm{\grad u}_{\dual{H^{1-s}(\div; \Omega)}}^2 \leq C_{d,2}\norm{\grad}_{\mathcal{L}(H_0^s(\Omega), \dual{H^{1-s}(\div;\Omega)})}^2\norm{u}_{H_0^s(\Omega)}^2,
		}
		which proves the second inequality of \eqref{eq:negFracLap-cont-speceq} with $C_2$ as given in \eqref{eq:cont-negFracLap-constants}.
		
		We can treat the lower bound of \eqref{eq:negFracLap-cont-speceq} similarly, 
		but now use the lower bound of \eqref{eq:cont-fracHdiv-speceq} and \eqref{eq:bogovskii-interpolation}.
		That is, we have 
		\eqn{
			\label{eq:cont-negFracLap-proof1}
			\inner{B^s u}{u} \geq C_{d,1}\norm{\grad u}^2_{\dual{H^{1-s}(\div; \Omega)}},
		}
		and, since $T\grad$ is the identity on $H_0^s(\Omega)$,
		\eqn{
			\label{eq:cont-negFracLap-proof2}
			\norm{u}_{H_0^s(\Omega)} = \norm{T\grad u}_{H_0^s(\Omega)} \leq \norm{T}_{\mathcal{L}(\dual{H^{1-s}(\div;\Omega)}, H_0^s(\Omega))} \norm{\grad u}_{\dual{H^{1-s}(\div; \Omega)}}.
		}
		Combining \eqref{eq:cont-negFracLap-proof1} and \eqref{eq:cont-negFracLap-proof2} yields
		\eqns{
			\inner{B^s u}{u}  \geq C_{d,1} \norm{T}_{\mathcal{L}(\dual{H^{1-s}(\div;\Omega)}, H_0^s(\Omega))}^{-2}\norm{u}_{H_0^s(\Omega)}^2.
		}
	\end{proof}
	\begin{rem}
		\label{rem:positive-fractional-Hdiv}
		With the definition of $B^s$ given in \eqref{eq:auxSpace-PC-cont-def}, we have essentially translated the problem of preconditioning $(-\Delta)^{-s}$ to the problem of preconditioning $\Lambda^{1-s}$. The advantage of this is that the latter problem has positive exponent, and so, as we will see, will have similar spectral properties to $\Lambda$, for which efficient preconditioning strategies have been studied earlier. 
	\end{rem}
	\subsection{Discrete setting}
	We will now use the construction of $B^s$ from the previous section as motivation to construct an analogous discrete operator.
	To that end, let $\T_h$ be a shape-regular triangulation of $\Omega$, with characteristic mesh size $h$. For $r \geq 0$, we let $S_h$ denote the space of all discontinuous, piecewise polynomials of degree at most $r$, subordinate to $\T_h$. That is,
	\eqns{
		S_h = \setof{u \in L^2(\Omega) \sothat u\big|_T \in P_r(T),\, \forall T \in \T_h}.
	}
	We further let $V_h = \mathcal{RT}_r(\T_h) \subset H(\div; \Omega)$ be the Raviart-Thomas space of index $r$, and $C_h = \mathcal{NE}_r(\T_h) \subset H(\curl; \Omega)$ the Nedelec space of first kind of index $r$, both relative to the triangulation $\T_h$.
	It is then well-known that $\curl(C_h) \subset V_h$, and $\div(V_h) \subset S_h$.
	We define the discrete gradient operator $\grad_h: S_h \to V_h$ by
	\eqn{
		\label{eq:discrete-grad-def}
		\inner{\grad_h u}{\tau} = - \inner{u}{\div \tau}, \quad u \in S_h, \, \tau \in V_h,
	}
	and discrete curl operator $\curl_h: V_h \to C_h$ by
	\eqn{
		\label{eq:discrete-curl-def}
		\inner{\curl_h \tau}{q} = \inner{\tau}{\curl q}, \quad \tau \in V_h, \, q \in C_h.
	}
	With these definitions, we have the discrete Helmholtz decomposition $V_h = \curl C_h \oplus \grad_h S_h$. That is, every $\tau \in V_h$ can be written as 
	\eqn{
		\label{eq:discrete-helmholtz}
		\tau = \grad_h u + \curl q,
	}
	for unique $u \in S_h$ and $q \in \curl_h V_h$. Cf. e.g. \cite{arnold2000multigrid}. Moreover, this decomposition is orthogonal in both $\inner{\cdot}{\cdot}$ and $\Lambda(\cdot,\cdot)$.
	
	To get a discrete analogue of the preconditioner $B^s$ in \eqref{eq:auxSpace-PC-cont-def}, we further need to define discrete counterparts to the operators $-\Delta$ and $\Lambda$. To that end, we define the discrete Laplacian as $A_h:= \adjoint{\grad_h}\grad_h$, i.e. $A_h$ is the symmetric operator on $S_h$ that satisfies
	\eqn{
		\label{eq:Ah-def}
		\inner{A_h u}{v} = \inner{\grad_h u}{\grad_h v}, \quad u,\, v\in S_h.
	}
	Lastly, since $V_h$ is a conforming discretization of $H(\div;\Omega)$, we simply take $\Lambda_h: V_h \to V_h$ to be the restriction of $\Lambda$ to $V_h$. In other words,
	\eqns{
		\inner{\Lambda_h \sigma}{\tau} = \Lambda\inner{\sigma}{\tau},\quad \sigma, \, \tau \in V_h.
	}
	
	It is well-known that (cf. for instance \cite{brezzi2013mixed}), with these particular choices of $S_h$ and $V_h$, there is a $\beta>0$ indepedent of $h$ so that for every $u \in S_h$
	\eqn{
		\label{eq:discrete-inf-sup}
		\sup_{\tau \in V_h} \frac{\inner{u}{\div \tau}}{\inner{\Lambda_h \tau}{\tau}^{\frac{1}{2}}} \geq \beta \norm{u}.
	}
	This implies that $\div: V_h \to S_h$ is surjective or, equivalently, that $\grad_h: S_h \to V_h$ is injective.
	As a consequence, $A_h$ is not only symmetric, but also positive-definite, and so $A_h^s$ is well-defined for every $s \in \reals$. The discrete counterpart to \eqref{eq:continuous-fraclap-problem} is then to find, for $s\in [0,1]$ and $f \in S_h$, a $u \in S_h$ such that
	\eqn{
		\label{eq:discrete-fraclap-problem}
		A_h^{-s}u = f.
	}
	To precondition \eqref{eq:discrete-fraclap-problem}, we seek a symmetric positive definite operator $B^s_h: S_h \to S_h$ which is easy to compute and spectrally equivalent to $A_h^s$, with constants of equivalence independent of $h$. Using the previous continuous preconditioner defined in \eqref{eq:auxSpace-PC-cont-def} as motivation, we will see that 
	\eqn{
		\label{eq:auxSpace-discrete-PC-def}
		B^s_h = \adjoint{\grad_h} B_{\div,h}^{1-s}\grad_h,
	}
	where $B_{\div,h}^{1-s}: V_h \to V_h$ is a symmetric positive definite operator spectrally equivalent to $\Lambda_h^{-(1-s)}$, leads to an efficient preconditioner for $A^{-s}_h$. The key result in this section is given in Theorem \ref{thm:auxSpace-discrete-PC-speceq} below, whose proof will resemble the argument we made in the continuous setting. In particular, we must ensure that $\grad_h$ has the appropriate upper and lower bounds when $s=0$ and $s=1$. As we will see, the intermediate cases will then follow from Jensen's operator inequality.
	
	For the upper bounds of $\grad_h$, we have from the definitions of $\grad_h$ and $\Lambda_h$ that
	\eqn{
		\label{eq:gradh-upper-bound}
		\algnd{
			\inner{\Lambda_h^{-1}\grad_h u}{\grad_h u} &= \norm{\Lambda_h^{-\frac{1}{2}}\grad_h u}^2
			= \sup_{\tau \in V_h}\frac{\inner{\Lambda_h^{-\frac{1}{2}}\grad_h u}{\tau}^2}{\norm{\tau}^2} \\
			&= \sup_{\tau \in V_h} \frac{\inner{\grad_h u}{\tau}^2}{\inner{\Lambda_h \tau}{\tau}}
			\leq \norm{u}^2,
		}
	}
	which is the discrete analogue to $\grad \in \mathcal{L}(L^2(\Omega), \dual{H(\div;\Omega)})$. The discrete analogue to $\grad \in \mathcal{L}(H_0^1(\Omega), L^2(\Omega) )$ is simply that $\norm{\grad_h u}^2 = \inner{A_h u}{u}$.
	
	For the necessary lower bounds on $\grad_h$, we define $L: V_h \to S_h$ by $L \tau = u$ according to the discrete Helmholtz decomposition \eqref{eq:discrete-helmholtz}. It is then evident that $L\grad_h$ is the identity on $S_h$. That $L$ satisfies the discrete analogues to
	\eqref{eq:bogovskii-mapping-properties} is given in the following lemma.
	\begin{lemma}
		\label{lem:discrete-lifting-properties}
		With $L: V_h \to S_h$ as defined above, it holds for every $\tau \in V_h$ that
		\eqn{
			\label{eq:discrete-lifting-properties}
			\norm{L\tau}^2 \leq \beta^{-2} \inner{\Lambda_h^{-1}\tau}{\tau}, \quad \text{ and } \quad \inner{A_h L \tau}{L \tau } \leq \norm{\tau}^2,
		}
		where $\beta$ is given by \eqref{eq:discrete-inf-sup}.
	\end{lemma}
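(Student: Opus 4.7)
The plan is to prove the two inequalities separately, each time exploiting the discrete Helmholtz decomposition \eqref{eq:discrete-helmholtz} of $\tau\in V_h$ as $\tau = \grad_h L\tau + \curl q$ with $q\in C_h$, together with the fact that this splitting is orthogonal in both $\inner{\cdot}{\cdot}$ and $\Lambda(\cdot,\cdot)$.

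The second bound is essentially immediate. Since $A_h = \adjoint{\grad_h}\grad_h$ by \eqref{eq:Ah-def}, we have $\inner{A_h L\tau}{L\tau} = \norm{\grad_h L\tau}^2$, and the $L^2$-orthogonality of the Helmholtz decomposition gives
\eqns{
\norm{\tau}^2 = \norm{\grad_h L\tau}^2 + \norm{\curl q}^2 \;\geq\; \norm{\grad_h L\tau}^2 \;=\; \inner{A_h L\tau}{L\tau}.
}

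For the first bound I would apply the inf-sup condition \eqref{eq:discrete-inf-sup} to $u = L\tau\in S_h$ and manipulate the numerator so that $\tau$ rather than $\grad_h L\tau$ appears. Given any test $\sigma\in V_h$, Helmholtz-decompose $\sigma = \grad_h v + \curl r$. Combining \eqref{eq:discrete-grad-def} with the $L^2$-orthogonality of the splitting (so that $\inner{\grad_h L\tau}{\curl r} = 0$ and $\inner{\curl q}{\grad_h v} = 0$) yields
\eqns{
\inner{L\tau}{\div \sigma} = -\inner{\grad_h L\tau}{\sigma} = -\inner{\grad_h L\tau}{\grad_h v} = -\inner{\tau}{\grad_h v}.
}
Writing $\inner{\tau}{\grad_h v} = \inner{\Lambda_h^{-1/2}\tau}{\Lambda_h^{1/2}\grad_h v}$, a Cauchy--Schwarz gives $|\inner{L\tau}{\div \sigma}| \leq \inner{\Lambda_h^{-1}\tau}{\tau}^{1/2}\inner{\Lambda_h\grad_h v}{\grad_h v}^{1/2}$, while the $\Lambda$-orthogonality
\eqns{
\inner{\Lambda_h\sigma}{\sigma} = \inner{\Lambda_h\grad_h v}{\grad_h v} + \inner{\Lambda_h\curl r}{\curl r} \geq \inner{\Lambda_h\grad_h v}{\grad_h v}
}
controls the denominator. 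Hence, uniformly in $\sigma\in V_h$,
\eqns{
\frac{|\inner{L\tau}{\div \sigma}|}{\inner{\Lambda_h \sigma}{\sigma}^{1/2}} \;\leq\; \inner{\Lambda_h^{-1}\tau}{\tau}^{1/2}.
}
Taking the supremum over $\sigma\in V_h$ and applying \eqref{eq:discrete-inf-sup} gives $\beta\norm{L\tau} \leq \inner{\Lambda_h^{-1}\tau}{\tau}^{1/2}$, and squaring is the claim.

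The only delicate point is coordinating the two orthogonalities: the $L^2$-orthogonality is what removes the $\curl r$ contribution from the numerator and allows $\grad_h L\tau$ to be replaced by $\tau$, whereas the $\Lambda$-orthogonality is what ensures that the $\curl r$ contribution enters the denominator only as a nonnegative surplus. No further mesh-dependent ingredient is required; the $h$-independence of the resulting constant $\beta^{-2}$ is inherited directly from the $h$-independence of the inf-sup constant in \eqref{eq:discrete-inf-sup}.
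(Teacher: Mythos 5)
Your proof is correct and follows essentially the same route as the paper: apply the inf-sup condition \eqref{eq:discrete-inf-sup} to $u = L\tau$, then exploit the $L^2$- and $\Lambda$-orthogonality of the discrete Helmholtz decomposition to replace $\grad_h L\tau$ by $\tau$ in the numerator while controlling the denominator. The paper's version compresses your explicit decomposition of the test function $\sigma$ into the single asserted inequality $\sup_\sigma\inner{\grad_h u}{\sigma}/\inner{\Lambda_h\sigma}{\sigma}^{1/2}\le\sup_\sigma\inner{\tau}{\sigma}/\inner{\Lambda_h\sigma}{\sigma}^{1/2}$, and the second bound is handled identically.
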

	\begin{proof}
		Fix $\tau \in V_h$, and let $u = L\tau$. From \eqref{eq:discrete-inf-sup} and the decomposition \eqref{eq:discrete-helmholtz}, we have that
		\eqns{
			\beta\norm{u} \leq \sup_{\sigma \in V_h} \frac{\inner{\grad_h u}{\sigma}}{\inner{\Lambda_h \sigma}{\sigma}^{\frac{1}{2}}} \leq \sup_{\sigma \in V_h} \frac{\inner{\tau}{\sigma}}{\inner{\Lambda_h \sigma}{\sigma}^{\frac{1}{2}}}.
		}
		Replacing $\sigma$ by $\Lambda_h^{-\frac{1}{2}}\sigma$ in the above yields
		\eqns{
			\beta\norm{u} \leq \sup_{\sigma \in V_h}\frac{\inner{\Lambda_h^{-\frac{1}{2}}\tau}{\sigma}}{\norm{\sigma}} \leq \inner{\Lambda_h^{-1}\tau}{\tau}^{\frac{1}{2}},
		}
		which proves the first inequality of \eqref{eq:discrete-lifting-properties}.
		
		The definitions of $A_h$ and $L$, and the $L^2$-orthogonality of the decomposition \eqref{eq:discrete-helmholtz} imply the second inequality of \eqref{eq:discrete-lifting-properties}, since
		\eqns{
			\inner{A_h L\tau}{L\tau} = \norm{\grad_h u}^2 \leq \norm{\tau}^2.
		}
	\end{proof}
	
	We are now in a position to state and prove the main spectral equivalence result of this section, from which the spectral equivalence between $B^s_h$ given in \eqref{eq:auxSpace-discrete-PC-def} and $A^s_h$ will readily follow.
	\begin{theorem}
		\label{thm:auxSpace-discrete-PC-speceq}
		Let $\grad_h$, $A_h$ and $\Lambda_h$ be defined as above, and let $s\in [0,1]$. Then, for every $u \in S_h$
		\eqn{
			\label{eq:discrete-auxSpace-speceq}
			\beta^{2(1-s)} \inner{A_h^{s} u}{u} \leq \inner{\Lambda_h^{-(1-s)}\grad_h u}{\grad_h u} \leq \inner{A_h^s u}{u},
		}
		where $\beta$ is given by \eqref{eq:discrete-inf-sup}.
	\end{theorem}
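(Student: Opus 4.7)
The plan is to verify the endpoints $s=0$ and $s=1$ directly from results already at hand, and to obtain the intermediate cases by interpolation via Jensen's operator inequality (Lemma \ref{lem:gen-jensen-inequality}) combined with the Löwner-Heinz inequality \eqref{eq:Loewner-Heinz}. On $S_h$, the desired double estimate is equivalent to the operator inequality
\[
\beta^{2(1-s)} A_h^s \leq \adjoint{\grad_h}\Lambda_h^{-(1-s)}\grad_h \leq A_h^s.
\]
At $s=1$ both sides reduce to $A_h = \adjoint{\grad_h}\grad_h$ by \eqref{eq:Ah-def}, and at $s=0$ the upper bound is precisely \eqref{eq:gradh-upper-bound}, while the lower bound follows by inserting $\tau = \grad_h u$ into the first estimate of Lemma \ref{lem:discrete-lifting-properties} together with the identity $L\grad_h u = u$.

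For the upper bound at an intermediate $s \in (0,1)$, I introduce $T = \grad_h A_h^{-1/2}: S_h \to V_h$, which satisfies $\adjoint{T}T = A_h^{-1/2}\adjoint{\grad_h}\grad_h A_h^{-1/2} = 1$ on $S_h$, so the contraction hypothesis of Lemma \ref{lem:gen-jensen-inequality} is fulfilled. Applying that lemma with the SPD operator $\Lambda_h^{-1}$ on $V_h$ and exponent $1-s$ yields
\[
A_h^{-1/2}\adjoint{\grad_h}\Lambda_h^{-(1-s)}\grad_h A_h^{-1/2} \leq \bigl(A_h^{-1/2}\adjoint{\grad_h}\Lambda_h^{-1}\grad_h A_h^{-1/2}\bigr)^{1-s}.
\]
Now \eqref{eq:gradh-upper-bound} implies that the base on the right is bounded by $A_h^{-1}$, and Löwner-Heinz then raises this to $A_h^{-(1-s)}$. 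Conjugating the resulting inequality by $A_h^{1/2}$ on both sides produces exactly $\adjoint{\grad_h}\Lambda_h^{-(1-s)}\grad_h \leq A_h^s$.

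For the lower bound, I analogously set $N = \beta L\Lambda_h^{1/2}: V_h \to S_h$. The first estimate of Lemma \ref{lem:discrete-lifting-properties} reads $\adjoint{L}L \leq \beta^{-2}\Lambda_h^{-1}$, so $\adjoint{N}N = \beta^2\Lambda_h^{1/2}\adjoint{L}L\Lambda_h^{1/2} \leq 1$ on $V_h$. Applying Lemma \ref{lem:gen-jensen-inequality} with the SPD operator $A_h$ on $S_h$ and exponent $s$ gives
\[
\beta^2\Lambda_h^{1/2}\adjoint{L}A_h^s L\Lambda_h^{1/2} \leq \bigl(\beta^2\Lambda_h^{1/2}\adjoint{L}A_h L\Lambda_h^{1/2}\bigr)^{s}.
\]
The second estimate of Lemma \ref{lem:discrete-lifting-properties} gives $\adjoint{L}A_h L \leq 1$, so the argument of the $s$-th power is bounded above by $\beta^2 \Lambda_h$, and Löwner-Heinz then bounds the right hand side by $\beta^{2s}\Lambda_h^s$. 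Conjugating by $\Lambda_h^{-1/2}$ yields $\adjoint{L}A_h^s L \leq \beta^{-2(1-s)}\Lambda_h^{-(1-s)}$, and pairing this with $\grad_h u$ together with $L\grad_h u = u$ delivers the lower bound.

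The main conceptual step is selecting the contraction operators $T$ and $N$ so that Jensen's inequality feeds the endpoint estimates \eqref{eq:gradh-upper-bound} and Lemma \ref{lem:discrete-lifting-properties} into the correct intermediate inequalities; the remaining manipulations are routine applications of Löwner-Heinz and conjugation by a fractional power.
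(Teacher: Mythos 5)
Your proof is correct and takes essentially the same approach as the paper: Jensen's operator inequality (Lemma \ref{lem:gen-jensen-inequality}) applied to a suitably chosen contraction, combined with the Löwner--Heinz inequality, with the lower bound handled by exactly the same operator $\beta L\Lambda_h^{1/2}$ as the paper. The only cosmetic difference is in the upper bound, where the paper uses the contraction $\Lambda_h^{-1/2}\grad_h$ with base $\Lambda_h$ and exponent $s$ (which yields $A_h^s$ in a single Jensen step), whereas you normalize on the $S_h$ side with $\grad_h A_h^{-1/2}$ and base $\Lambda_h^{-1}$, requiring one extra harmless application of Löwner--Heinz before conjugating by $A_h^{1/2}$.
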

	\begin{proof}
		Fix $u\in S_h$ and $s\in [0,1]$.
		We begin by proving the second inequality of \eqref{eq:discrete-auxSpace-speceq}.
		Define $T_1 = \Lambda_h^{-\frac{1}{2}}\grad_h: S_h \to V_h$. From  \eqref{eq:gradh-upper-bound} it follows that $\adjoint{T_1}T_1 \leq 1$. Thus, Lemma \ref{lem:gen-jensen-inequality} implies that
		\eqn{
			\label{eq:discrete-auxspace-proof1}
			\adjoint{T_1}\Lambda_h^s T_1 \leq \left( \adjoint{T_1}\Lambda_h T_1 \right)^s.
		}
		Inserting the definition of $T_1$ into \eqref{eq:discrete-auxspace-proof1} yields
		\eqns{
			\adjoint{\grad_h}\Lambda_h^{-(1-s)}\grad_h \leq \left(\adjoint{\grad_h} \grad_h \right)^s = A_h^s,
		}
		which is equivalent to the second inequality of \eqref{eq:discrete-auxSpace-speceq}.	
		
		In proving the first inequality of \eqref{eq:discrete-auxSpace-speceq}, we will again make use of Lemma \ref{lem:gen-jensen-inequality}. To that end, we now set $T_2 = \beta L \Lambda_h^{\frac{1}{2}}$, and from Lemma \ref{lem:discrete-lifting-properties} it follows that $\adjoint{T_2}T_2 \leq 1$. Thus, an application of Lemma \ref{lem:gen-jensen-inequality} yields
		\eqns{
			\adjoint{T_2}A_h^s T_2 \leq \left( \adjoint{T_2} A_h T_2 \right)^s,
		}	
		which after inserting the definition of $T_2$ becomes
		\eqn{
			\label{eq:discrete-auxspace-proof2}
			\beta^{2(1-s)} \Lambda_h^{\frac{1}{2}}\adjoint{L} A_h^s L \Lambda_h^{\frac{1}{2}} \leq \left( \Lambda_h^{\frac{1}{2}}\adjoint{L} A_h L \Lambda_h^{\frac{1}{2}} \right)^s.
		}
		
		From Lemma \ref{lem:discrete-lifting-properties} $\adjoint{L}A_h L \leq 1$. Pre- and post multplying this inequality by $\Lambda_h^{\frac{1}{2}}$ and using the Löwner-Heinz inequality \eqref{eq:Loewner-Heinz}, we deduce that
		\eqn{
			\label{eq:discrete-auxspace-proof3}
			\left( \Lambda_h^{\frac{1}{2}}\adjoint{L} A_h L \Lambda_h^{\frac{1}{2}} \right)^s \leq \Lambda_h^s.
		}
		We now use \eqref{eq:discrete-auxspace-proof2} together with \eqref{eq:discrete-auxspace-proof3} and pre- and post multiply by $\Lambda_h^{-\frac{1}{2}}$ to get
		\eqns{
			\beta^{2(1-s)}\adjoint{L}A_h^s L \leq \Lambda_h^{-(1-s)}.
		}
		Finally, multiplying from the left by $\adjoint{\grad_h}$ and from the right by $\grad_h$, and using that both $\adjoint{\grad_h}\adjoint{L}$ and $L \grad_h$ are the identity on $S_h$, we arrive at
		\eqns{
			\beta^{2(1-s)}A^s_h \leq \adjoint{\grad_h}\Lambda_h^{-(1-s)}\grad_h,
		}
		which is the first inequality of \eqref{eq:discrete-auxSpace-speceq}.
	\end{proof}
	
	\begin{cor}
		\label{cor:discrete-auxSpace-speceq}
		Under the same assumptions as in Theorem \ref{thm:auxSpace-discrete-PC-speceq}, suppose we are given a symmetric positive definite operator $B_{\div,h}^{1-s}: V_h \to V_h$ spectrally equivalent to $\Lambda_h^{-(1-s)}$. That is, there are constants $C_1, C_2 > 0$ so that
		\eqn{
			\label{eq:discrete-fracHdiv-speceq}
			C_1 \inner{\Lambda_h^{-(1-s)}\tau}{\tau} \leq \inner{B_{\div,h}^{1-s} \tau}{\tau} \leq C_2\inner{\Lambda_h^{-(1-s)}\tau}{\tau}
		}
		for every $\tau \in V_h$. Then $B^s_h$ defined by \eqref{eq:auxSpace-discrete-PC-def} satisfies
		\eqn{
			\label{eq:discrete-Bs-speceq}
			C_1 \beta^{2(1-s)}\inner{A_h^s u}{u} \leq \inner{B^s_h u}{u} \leq C_2 \inner{A^s_h u}{u}
		}
		for every $u \in S_h$.
	\end{cor}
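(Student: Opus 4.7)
The plan is to derive \eqref{eq:discrete-Bs-speceq} as a direct composition of the two chains of inequalities already at our disposal, so essentially no new technical work is required beyond unwinding the definition of $B^s_h$.

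First I would fix $u \in S_h$ and set $\tau = \grad_h u \in V_h$. By the very definition \eqref{eq:auxSpace-discrete-PC-def} of $B^s_h$, we have the identity
\eqns{
	\inner{B^s_h u}{u} = \inner{\adjoint{\grad_h} B_{\div,h}^{1-s}\grad_h u}{u} = \inner{B_{\div,h}^{1-s}\grad_h u}{\grad_h u} = \inner{B_{\div,h}^{1-s}\tau}{\tau}.
}
Next, I would insert $\tau = \grad_h u$ into the hypothesized spectral equivalence \eqref{eq:discrete-fracHdiv-speceq} to obtain
\eqns{
	C_1 \inner{\Lambda_h^{-(1-s)}\grad_h u}{\grad_h u} \leq \inner{B^s_h u}{u} \leq C_2 \inner{\Lambda_h^{-(1-s)}\grad_h u}{\grad_h u}.
}

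The final step is to invoke Theorem \ref{thm:auxSpace-discrete-PC-speceq}, which sandwiches $\inner{\Lambda_h^{-(1-s)}\grad_h u}{\grad_h u}$ between $\beta^{2(1-s)}\inner{A_h^s u}{u}$ and $\inner{A_h^s u}{u}$. Chaining the lower bound of the displayed inequality above with the lower bound from the theorem yields $\inner{B^s_h u}{u} \geq C_1 \beta^{2(1-s)} \inner{A_h^s u}{u}$, and similarly chaining the two upper bounds yields $\inner{B^s_h u}{u} \leq C_2 \inner{A_h^s u}{u}$, which together give \eqref{eq:discrete-Bs-speceq}. There is no real obstacle here — the substantive work was already done in Theorem \ref{thm:auxSpace-discrete-PC-speceq}, and the corollary just transfers that spectral equivalence across a user-supplied preconditioner for $\Lambda_h^{-(1-s)}$, with the constants degrading only by the multiplicative factors $C_1, C_2$ from \eqref{eq:discrete-fracHdiv-speceq}.
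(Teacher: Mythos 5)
Your proposal is correct and follows the same route as the paper: unwind the definition of $B^s_h$, apply the spectral equivalence \eqref{eq:discrete-fracHdiv-speceq} with $\tau = \grad_h u$, and then sandwich $\inner{\Lambda_h^{-(1-s)}\grad_h u}{\grad_h u}$ using Theorem \ref{thm:auxSpace-discrete-PC-speceq}. No substantive difference.
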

	
	\begin{proof}
		Take any $s\in [0,1]$ and $u \in S_h$. By the definition of $B^s_h$, the second inequalities of \eqref{eq:discrete-fracHdiv-speceq} and \eqref{eq:discrete-auxSpace-speceq}
		\eqns{
			\inner{B^s_h u}{u} \leq C_2\inner{\Lambda_h^{-(1-s)}\grad_h u}{\grad_h u} \leq C_2\inner{A^s_h u}{u},
		}
		which proves the second inequality of \eqref{eq:discrete-Bs-speceq}. The first inequality is proved similarly, using the lower bounds in \eqref{eq:discrete-fracHdiv-speceq} and \eqref{eq:discrete-auxSpace-speceq}.
	\end{proof}
	\begin{rem}
		\label{rem:implementational-concerns}
		At this point it is worth remarking on the implementation of $B^s_h$. 
		In computer code, a function $u \in S_h$ can have two distinct 
		representations as vectors in $\reals^{N_S}$, where $N_S = \dim S_h$. 
		Let $\setof{\phi_h^i}_{i=1}^{N_S}$ be a basis for $S_h$. 
		Then, if $u = \sum_{i=1}^{N_S} c_i \phi_h^i$, we call the vector 
		$\vec{u} = (c_1,\ldots, c_{N_S})^T \in \reals^{N_S}$ the coefficient vector representation of $u$, 
		while the vector $\dualvec{u} \in \reals^{N_S}$ with entries $\dualvec{u}_i = \inner{u}{\phi_h^i}$, 
		the dual vector representation of $u$. 
		Cf. e.g. \cite[Sec. 15]{bramble1993multigrid} for more details. 
		Let $\setof{\psi_h^i}_{i=1}^{N_V}$, with $N_V = \dim V_h$, be a basis for $V_h$. 
		For $\tau \in V_h$, let $\vec{\tau}$ and $\dualvec{\tau}$ be the analogous 
		coefficient- and dual vector representations of $\tau$. 
		The most straightforward matrix realization of $\grad_h$ is then 
		the matrix $\mat{D}_h \in \reals^{N_V \times N_S}$ with entries
		\eqns{
			(\mat{D}_h)_{i,j} = -\inner{\phi_h^i}{\div \psi_h^i}.
		}
		We see that $\mat{D}_h$ takes coefficient vectors in $\reals^{N_S}$ 
		and returns dual vectors in $\reals^{N_V}$. 
		Conversely, the transpose $\mat{D}_h^T$ takes coefficient vectors in $\reals^{N_V}$ 
		as input and returns dual vectors in $\reals^{N_S}$. 
		If $\mat{B}^{1-s}_{\div,h}$ is the matrix realization of $B^{1-s}_{\div,h}$ 
		taking dual vectors as input and returning coefficient vectors, 
		$B^s_h$ can be realized by the matrix
		\eqns{
			\mat{B}^s_h = \mat{D}_h^T \mat{B}^{1-s}_{\div, h} \mat{D}_h.
		}
		Then, $\mat{B}^s_h$ takes coefficient vectors as input and returns dual vectors, 
		which is opposite to usual implementations of preconditioners. 
		Thus, if this preconditioner should be used as part of a preconditioner 
		for problems of the form \eqref{eq:trace-prob-positive}, some care is needed. 
		In particular, the Lagrange multiplier $\lambda$ should be represented as a dual vector, 
		while the trace constraint $g$ should be represented by a coefficient vector. 
		We see then that the matrix realization of the trace operator $T$ 
		should take coefficient vectors to coefficient vectors. 
		That is, the matrix is simply a mapping of degrees of freedom from one space to another, 
		and no numerical integration is needed.
	\end{rem}
	By Corollary \ref{cor:discrete-auxSpace-speceq}, 
	we know that we can construct an efficient preconditioner for $A_h^{-s}$, 
	provided we have an efficient preconditioner for $\Lambda_h^{1-s}$ at our disposable. 
	This is by no means a given. 
	However, we will in the next section propose a construction of $B^{1-s}_{\div,h}$ on $V_h$ 
	satisfying \eqref{eq:discrete-fracHdiv-speceq} based on an additive multigrid approach.
	
	\section{Additive multigrid methods for $\Lambda_h^{s}$}
	\label{sec:decompositions}
	Recall that in section \ref{sec:precond-negfraclap} we constructed an efficient preconditioner for $A_h^{-s}$, where $A_h$ is a discrete Laplacian on $S_h$ and $s \in [0,1]$ provided we are given an efficient preconditioner for $\Lambda_h^{1-s}$ on $V_h$, which we denote by $B^{1-s}_{\div,h}$. In this section we give one construction of $B^{1-s}_{\div,h}$ based on a multigrid approach similar to that presented in \cite{baerland2018multigrid}. 
	
	To motivate the construction we note that multigrid methods, and other space decomposition methods, are popular and well-studied preconditioning strategies for $H(\div)$ problem.
	A key observation is that $\Lambda_h$ reduces to the identity operator on the kernel of $\div$ in $V_h$, 
	while on the $L^2$-orthogonal complement $\Lambda_h$ roughly behaves like an elliptic operator with a zero-order term. 
	In particular, $\Lambda_h$ can be decomposed into operators where subspace decomposition methods 
	have proven to be efficient.  
	We will now see that this line of reasoning continues to hold for $\Lambda_h^{s}$.
	To that end, consider the discrete Helmholtz decomposition of $v \in V_h$ given in \eqref{eq:discrete-helmholtz},
	\eqn{
		\label{eq:discrete-helmholtz2}
		\tau = \grad_h u + \curl q,
	}
	where $u \in S_h$ and $q \in \curl_hV_h$. From the definition of $\grad_h$, we have that $\Lambda_h = I + \grad_h \adjoint{\grad_h}$, which when applied to \eqref{eq:discrete-helmholtz2} yields
	\eqn{
		\label{eq:Lambda_h-helmholtz-decomposition}
		\Lambda_h \tau = \grad_h (I + A_h)u + \curl q,
	}
	where we recall that $A_h = \adjoint{\grad_h}\grad_h$ is a discrete Laplacian. 
	We see that $\Lambda_h$ is invariant in both $\grad_h S_h$ and its orthogonal complement, $\curl C_h$. 
	From \eqref{eq:Lambda_h-helmholtz-decomposition} it is also evident that the projections 
	$\tau \mapsto \grad_h u$ and $\tau \mapsto \curl q$ both commute with $\Lambda_h$. 
	In accordance with the discussion made in \cite{davis1957schwarz}, 
	it follows that $\Lambda_h^{s}$ also leave the decomposition in \eqref{eq:discrete-helmholtz2} invariant. 
	Thus, $\Lambda_h^{s}$ reduces to the identity operator on $\curl C_h$, 
	and behaves like $(I + A_h)^{s}$ on $\grad_h S_h$. 
	Multigrid methods were shown to be computationally effective for such operators in \cite{baerland2018multigrid}, 
	and this motivates using a similar approach for constructing preconditioners for $\Lambda_h^{s}$.
	
	Before proceeding, some issues need to be adressed. 
	As shown in Lemma \ref{lem:noninheritance-inequality}, 
	the operators on each level will not be inherited. 
	Therefore, the analysis will follow the framework of \cite{bramble1991analysis}. 
	Another problem is that the computation of $\Lambda_h^s$ 
	requires solving a potentially large eigenvalue problem, 
	which can be prohibitively expensive. 
	As a consequence, we cannot assume that we can compute errors on each level. 
	Standard multigrid algorithms, such as V-cycle, should then be excluded. 
	For this reason, we design the operators as additive multigrid operators, \cite{bramble1990parallel}, 
	where the residual of the problem is transferred to every grid level, 
	and no application of $\Lambda_h^s$ is required.
	
	In the following, we will use the same multilevel decomposition as was used in \cite{arnold1997preconditioning}, 
	but we emphasize that the analysis extends to other decompositions, 
	such as that given in \cite{hiptmair1997multigrid}.
	
	To construct our multigrid operator for $\Lambda_h^{s}$ suppose $\T_h$ is the result of successive refinements. That is, we are given a sequence
	\eqns{
		\T_1 \subset \cdots \subset \T_J = \T_h,
	}
	of shape-regular triangulations of $\Omega$, and $\T_k$ has charachteristic mesh size $h_k$ for $k= 1, \ldots, J$. We will assume that the refinements are bounded, in the sense that there is a constant $\gamma \geq 1$ so that $h_{k-1} \leq \gamma h_k$ for $k=2,\ldots,J$. We note that in applications $\gamma$ is around $2$. For each $k$, we set $V_k = \mathcal{RT}_r(\T_k)$ as the Raviart-Thomas space of index $r$ relative to the mesh $\T_k$. We further define $S_k \subset S_h$ and $C_k \subset C_h$ analogously, as well as operators $\grad_k: S_k \to V_k$ and $\curl_k: V_k \to C_k$ as the $L^2$-adjoint of $\div$ and $\curl$, respectively.
	
	For each $k$, we define $\Lambda_k: V_k \to V_k$ by
	\eqns{
		\inner{\Lambda_k \sigma}{\tau} = \Lambda\inner{\sigma}{\tau}, \quad \sigma,\, \tau \in V_k.
	}
	It is evident that $\Lambda_k$ is symmetric positive-definite, and so $\Lambda_k^\theta$ is well-defined for every $\theta \in \reals$, and as a consequence of Lemma  \ref{lem:noninheritance-inequality}
	\eqn{
		\label{eq:noninheritance-Hdiv}
		\inner{\Lambda_k^s \tau}{\tau} \leq \inner{\Lambda_{k-1}^s \tau}{\tau}
	}
	for $s \in  [0,1]$ and $\tau \in V_{k-1}$. For every $k$ we define $Q_k: V \to V_k$ as the $L^2$-orthogonal projection and $P^s_{k,k-1}: V_k \to V_{k-1}$ by
	\eqns{
		\inner{\Lambda_{k-1}^s P^s_{k,k-1} \sigma_k}{\tau_{k-1}} = \inner{\Lambda_{k}^s \sigma_k}{\tau_{k-1}}, \quad \sigma_k \in V_k, \, \tau_{k-1} \in V_{k-1},
	}
	with the interpretation that $P_{1,0}^s = 0$. We go on to define $P^s_k := P^s_{k+1,k} \cdots P^s_{J,J-1}: V_h \to V_k$, which satisfies
	\eqns{
		\inner{\Lambda_k^s P^s_k \sigma}{\tau_k} = \inner{\Lambda_h^s \sigma}{\tau_k},
	}
	for every $\sigma \in V_h$ and $\tau_k \in V_k$.
	
	It follows by the definitions of $P^s_k$ and $Q_k$ that
	\eqn{
		\label{eq:AP=QA-Hdiv}
		\Lambda_k^s P^s_k = Q_k \Lambda_h^s.
	}
	Note that in general $P^s_k$ is not a projection, except when $s=0$ (in which case it coincides with $Q_k$) and $s=1$. However, when $s\in (0,1)$ we have for any $\tau \in V_k$ that
	\eqns{
		\algnd{
			\inner{\Lambda_k^s P^s_k \tau}{P^s_k \tau} &= \inner{\Lambda_h^s \tau}{P^s_k \tau} \\
			&\leq \inner{\Lambda_h^s \tau}{\tau}^{\frac{1}{2}} \inner{\Lambda_h^s P^s_k \tau}{P^s_k \tau}^{\frac{1}{2}}.
		}
	}
	Applying \eqref{eq:noninheritance-Hdiv} in the above, we deduce that
	\eqn{
		\label{eq:noninheritance-Hdiv-adjoint}
		\inner{\Lambda_k^s P^s_k \tau}{P^s_k \tau} \leq \inner{\Lambda_h^s \tau}{\tau}.
	}
	
	Suppose now that on each level $k$ we are given symmetric positive definite operators $R^s_k: V_k \to V_k$. 
	As is usual, we call these operators smoothers, and they should, in a sense to made clearer below, approximate $\Lambda_k^{-s}$. 
	We then define our additive multigrid preconditioner $B^s_{\div,h}: V_h \to V_h$ as
	\eqn{
		\label{eq:Bsdiv-def}
		B^s_{\div,h} = \sum_{k=1}^J R^s_k Q_k.
	}
	
	The following theorem gives sufficient conditions on the smoothers to establish spectral equivalence between $B^s_{\div,h}$ and $\Lambda_h^{-s}$. The proof will mostly follow by standard techniques, but some care is needed since the operators are not inherited between grid levels.
	\begin{theorem}
		\label{thm:Bsdiv-spectral-equivalence}
		Let $s\in [0,1]$ and suppose that for each $k=1,\ldots,J$, the operator $R^s_k$ as defined above satisfies for every $\tau \in V_k$
		\eqn{
			\label{eq:Rs-condition1}
			\inner{R^s_k \tau}{\tau} \leq C_1 \inner{\Lambda_k^{-s}\tau}{\tau},
		}
		and
		\eqn{
			\label{eq:Rs-condition2}
			\inner{\left(R^s_k\right)^{-1}(I-P^s_{k,k-1})\tau}{(I-P^s_{k,k-1})\tau} \leq C_2 \inner{\Lambda_k^{s}(I-P^s_{k,k-1})\tau}{(I-P^s_{k,k-1})\tau}
		}
		for some constants $C_1$ and $C_2$ that are independent of $k$. Then,
		\eqn{
			\label{eq:Bsdiv-spectral-equivalence}
			C_2^{-1} \inner{\Lambda_h^{s} \tau}{\tau} \leq \inner{B^s_{\div,h}\Lambda_h^s \tau}{\Lambda_h^s \tau} \leq C_1 J \inner{\Lambda_h^{s}\tau}{\tau}
		}
	\end{theorem}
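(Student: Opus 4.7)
My plan is to follow the additive Schwarz framework of Bramble--Pasciak--Xu, making the substitution $v = \Lambda_h^s \tau$ so that the target inequality becomes
\eqns{
    C_2^{-1}\inner{\Lambda_h^s \tau}{\tau} \leq \sum_{k=1}^J \inner{R_k^s Q_k \Lambda_h^s \tau}{Q_k \Lambda_h^s \tau} \leq C_1 J \inner{\Lambda_h^s \tau}{\tau}.
}
The key identity I would use throughout is \eqref{eq:AP=QA-Hdiv}, namely $Q_k\Lambda_h^s = \Lambda_k^s P_k^s$, which translates $L^2$-projections of $\Lambda_h^s \tau$ to $\Lambda_k^s P_k^s\tau$.

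\textbf{Upper bound.} This is the routine half. Starting from $\inner{R_k^s Q_k \Lambda_h^s \tau}{Q_k \Lambda_h^s \tau} = \inner{R_k^s \Lambda_k^s P_k^s \tau}{\Lambda_k^s P_k^s \tau}$, I would apply \eqref{eq:Rs-condition1} to pull out a factor of $C_1$ and reduce the summand to $\inner{\Lambda_k^s P_k^s \tau}{P_k^s \tau}$. Then \eqref{eq:noninheritance-Hdiv-adjoint}, already established for $s \in (0,1)$ and trivial at the endpoints, bounds each such term by $\inner{\Lambda_h^s \tau}{\tau}$; summing over the $J$ levels yields the factor $C_1 J$.

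\textbf{Lower bound.} This is the main obstacle and requires a stable telescoping decomposition. Set $w_k = (I - P_{k,k-1}^s)P_k^s \tau = P_k^s \tau - P_{k-1}^s \tau \in V_k$, so that $\tau = \sum_k w_k$ (using $P_J^s = I$ and $P_0^s = 0$). Inserting this into $\inner{\Lambda_h^s \tau}{\tau}$ and using \eqref{eq:AP=QA-Hdiv} I get
\eqns{
    \inner{\Lambda_h^s\tau}{\tau} = \sum_{k=1}^J \inner{Q_k \Lambda_h^s\tau}{w_k} = \sum_{k=1}^J \inner{\Lambda_k^s P_k^s \tau}{w_k}.
}
Applying Cauchy--Schwarz in the $R_k^s$ inner product to each summand and then the discrete Cauchy--Schwarz over $k$ gives
\eqns{
    \inner{\Lambda_h^s\tau}{\tau} \leq \Bigl( \sum_k \inner{R_k^s \Lambda_k^s P_k^s\tau}{\Lambda_k^s P_k^s\tau}\Bigr)^{1/2} \Bigl(\sum_k \inner{(R_k^s)^{-1} w_k}{w_k}\Bigr)^{1/2}.
}
The first factor is precisely $\inner{B_{\div,h}^s \Lambda_h^s \tau}{\Lambda_h^s \tau}^{1/2}$. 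For the second factor, the hypothesis \eqref{eq:Rs-condition2} (applied with $P_k^s\tau \in V_k$ playing the role of $\tau$) bounds it by $C_2 \sum_k \inner{\Lambda_k^s w_k}{w_k}$.

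The crucial step is to telescope $\sum_k \inner{\Lambda_k^s w_k}{w_k}$. Expanding $w_k = P_k^s\tau - P_{k-1}^s\tau$ and using the defining relation of $P_{k,k-1}^s$ to rewrite mixed terms as $\inner{\Lambda_{k-1}^s P_{k-1}^s\tau}{P_{k-1}^s\tau}$, I obtain
\eqns{
    \inner{\Lambda_k^s w_k}{w_k} = \inner{\Lambda_k^s P_k^s\tau}{P_k^s \tau} - 2\inner{\Lambda_{k-1}^s P_{k-1}^s\tau}{P_{k-1}^s\tau} + \inner{\Lambda_k^s P_{k-1}^s\tau}{P_{k-1}^s\tau}.
}
The non-inheritance inequality \eqref{eq:noninheritance-Hdiv}, applied to $P_{k-1}^s\tau \in V_{k-1}$, controls the last term by $\inner{\Lambda_{k-1}^s P_{k-1}^s\tau}{P_{k-1}^s\tau}$, making the sum telescopic and bounded by $\inner{\Lambda_J^s P_J^s\tau}{P_J^s\tau} = \inner{\Lambda_h^s\tau}{\tau}$. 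Combining everything and dividing by $\inner{\Lambda_h^s\tau}{\tau}^{1/2}$ yields $\inner{\Lambda_h^s\tau}{\tau} \leq C_2 \inner{B_{\div,h}^s \Lambda_h^s\tau}{\Lambda_h^s\tau}$, completing the proof.
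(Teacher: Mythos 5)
Your proof is correct and follows essentially the same route as the paper: the same identity $Q_k\Lambda_h^s = \Lambda_k^s P_k^s$ for the upper bound via \eqref{eq:Rs-condition1} and \eqref{eq:noninheritance-Hdiv-adjoint}, and for the lower bound the same telescoping decomposition $\tau = \sum_k (P_k^s - P_{k-1}^s)\tau$, the same Cauchy--Schwarz pairing in the $R_k^s$ inner product, the use of \eqref{eq:Rs-condition2}, and the same application of the non-inheritance inequality \eqref{eq:noninheritance-Hdiv} to make the sum $\sum_k \inner{\Lambda_k^s w_k}{w_k}$ telescope to $\inner{\Lambda_h^s\tau}{\tau}$.
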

	\begin{proof}
		For the upper bound of \eqref{eq:Bsdiv-spectral-equivalence}, straightforward application of the definitions of $B^s_{\div,h}$ and $P^s_k$ show that
		\eqn{
			\label{eq:Bsdiv-spectral-equivalence-proof1}
			\inner{B^s_{\div,h}\Lambda_h^{s}\tau}{\Lambda_h^s \tau} = \sum_{k=1}^J \inner{R^s_k \Lambda_k^s P^s_k \tau}{\Lambda_k^s P^s_k \tau}.
		}
		Assumption \eqref{eq:Rs-condition1} and the non-inheritance inequality \eqref{eq:noninheritance-Hdiv-adjoint} then imply that
		\eqns{
			\inner{B^s_{\div,h}\Lambda_h^{s}\tau}{\Lambda_h^s \tau} \leq C_1 \sum_{k=1}^J \inner{\Lambda_k^s P^s_k \tau}{P^s_k \tau} \leq C_1 J \inner{\Lambda_h^s \tau}{\tau}.
		}
		
		In proving the lower bound of \eqref{eq:Bsdiv-spectral-equivalence}, we consider the decomposition $\tau = \sum_{k=1}\tau_k$, with $\tau_k = (P^s_k - P^s_{k-1})\tau = (I - P^s_{k,k-1})P^s_k \tau \in V_k$, for $k=1,\ldots,J$. Here, we interpret $P_0 = 0$ and $P_J = I$. Then,
		\eqns{
			\inner{\Lambda_h^s \tau}{\tau} = \sum_{k=1}^J \inner{\Lambda_k^s P^s_k \tau}{\tau_k} = \sum_{k=1}^J \inner{R^s_k \Lambda_k^s P^s_k \tau}{\left( R^s_k \right)^{-1}\tau_k}.
		}
		Since for every $k$, $R^s_k$ is symmetric positive definite, we can use Cauchy-Schwarz' and assumption \eqref{eq:Rs-condition2}, resulting in
		\eqn{
			\label{eq:Bsdiv-spectral-equivalence-proof2}
			\algnd{
				\inner{\Lambda_h^s \tau}{\tau} &\leq \sum_{k=1}^J \inner{R^s_k \Lambda_k^s P^s_k \tau}{\Lambda_k^s P^s_k \tau}^{\frac{1}{2}} \inner{\left( R^s_k \right)^{-1}\tau_k}{\tau_k}^{\frac{1}{2}} \\
				&\leq \left( \sum_{k=1}^J  \inner{R^s_k \Lambda_k^s P^s_k \tau}{\Lambda_k^s P^s_k \tau} \right)^{\frac{1}{2}}\left(C_2\sum_{k=1}^J \inner{\Lambda_k^s \tau_k}{\tau_k} \right)^{\frac{1}{2}} \\
				&= \inner{B^s_{\div,h}\Lambda_h^s \tau}{\Lambda_h^s \tau}^{\frac{1}{2}}\left(C_2\sum_{k=1}^J \inner{\Lambda_k^s \tau_k}{\tau_k} \right)^{\frac{1}{2}},
			}
		}
		where in the last step we have used \eqref{eq:Bsdiv-spectral-equivalence-proof1}. In view of \eqref{eq:Bsdiv-spectral-equivalence-proof2}, it only remains to show that
		\eqn{
			\label{eq:Bsdiv-spectral-equivalence-proof3}
			\sum_{k=1}^J \inner{\Lambda_k^s \tau_k}{\tau_k} \leq \inner{\Lambda_h^s \tau}{\tau}
		}
		to prove the lower bound of \eqref{eq:Bsdiv-spectral-equivalence}. Inserting the definition of $\tau_k$ and expanding factors, we find that
		\eqns{
			\inner{\Lambda_k^s \tau_k}{\tau_k} = \inner{\Lambda_k^s P^s_{k}\tau}{P^s_{k}\tau} - 2\inner{\Lambda_k^s P^s_k \tau}{P^s_{k-1}\tau} + \inner{\Lambda_k^s P^s_{k-1} \tau}{P^s_{k-1}\tau}. 
		}
		For the second term on the right hand side in the above, 
		we have $\inner{\Lambda_k^s P^s_k \tau}{P^s_{k-1}\tau} = \inner{\Lambda^s_{k-1}P^s_{k-1}\tau}{P^s_{k-1}\tau}$
		since $P^s_{k-1} = P^s_{k,k-1}P^s_k$, 
		while for the third term we apply \eqref{eq:noninheritance-Hdiv}. 
		Thus,
		\eqns{
			\inner{\Lambda_k^s \tau_k}{\tau_k} \leq \inner{\Lambda_k^s P^s_k \tau}{P^s_k \tau} - \inner{\Lambda_{k-1}^s P^s_{k-1}\tau}{P^s_{k-1}\tau}.
		}
		It follows that
		\eqns{
			\sum_{k=1}\inner{\Lambda_k^s \tau_k}{\tau_k} \leq \sum_{k=1}^J \left[ \inner{\Lambda_k^s P^s_k \tau}{P^s_k \tau} - \inner{\Lambda_{k-1}^s P^s_{k-1}\tau}{P^s_{k-1}\tau}\right] \leq \inner{\Lambda_h^s \tau}{\tau}.
		}
	\end{proof}
	
	Now it remains to choose smoothers satisfying the assumptions in Theorem \ref{thm:Bsdiv-spectral-equivalence}, and in this work we consider additive Schwarz operators based on the same space decomposition as in \cite{arnold1997preconditioning}. 
	
	For $k\geq 2$, let $\N_k$ denote the set of vertices in $\T_k$, and for each $\nu \in \N_k$, let $\T_{k,\nu}$ be the set of simplices meeting at the vertex $\nu$. Then $\T_{k,\nu}$ forms a triangulation of a small subdomain $\Omega_{k,\nu}$, and we define $V_{k,\nu}$ to be the subspace of functions in $V_k$ with support contained in $\bar{\Omega}_{k,\nu}$. The operators $\Lambda_{k,\nu}: V_{k,\nu} \to V_{k,\nu}$ and $P^s_{k,\nu}, \, Q_{k,\nu}: V_k \to V_{k,\nu}$ are then defined analogously to the corresponding operators above. We then define
	\eqn{
		\label{eq:Rs-def}
		R^s_k = \sum_{\nu \in \N_k} \Lambda_{k,\nu}^{-s}Q_{k,\nu},
	}
	while on the coarsest level we set $R^s_1 = \Lambda_1^{-s}$. It is well-known that additive Schwarz operator of the form \eqref{eq:Rs-def} are symmetric positive definite, and its inverse satisfies for $\tau \in V_k$
	\eqn{
		\label{eq:Rs-inv-identity}
		\inner{\left(R^s_k\right)^{-1} \tau}{\tau} = \inf_{\underset{\tau_\nu \in V_{k,\nu}}{\tau = \sum_{\nu} \tau_\nu}} \sum_{\nu \in \N_k} \inner{\Lambda_{k,\nu}^s  \tau_\nu}{\tau_\nu}.
	}
	Moreover, the decomposition $V_k = \sum_{\nu \in \N_k} V_{k,\nu}$ is $L^2$-stable in the sense that for every $\tau \in V_k$ there are $\tau_\nu \in V_{k,\nu}$ so that $\tau = \sum_{\nu} \tau_\nu$ and
	\eqn{
		\label{eq:L2-stable-decomposition}
		\sum_{\nu \in \N_k} \norm{\tau_\nu}^2 \leq c \norm{\tau}^2,
	}
	for some constant $c$, independent of $k$ and $v$. The analogue to \eqref{eq:L2-stable-decomposition} continues to hold if we define the decomposition $C_k = \sum_{\nu}C_{k,\nu}$ similarly (cf. \cite{arnold2000multigrid}).
	
	The verification of Assumption \eqref{eq:Rs-condition1} in Theorem \ref{thm:Bsdiv-spectral-equivalence} is given in the following lemma.
	\begin{lemma}
		\label{lem:Rs_k-upper-bound}
		For $k=1,\ldots,J$ and $s\in [0,1]$, let $R^s_k$ defined as above. Then there are constants $K_0, K_1 \geq 0$, independent of $k$ so that
		\eqn{
			\label{eq:R_k-upper-bound-01}
			\algnd{
				\inner{R^0_k \tau}{\tau} &\leq K_0\inner{\tau}{\tau} \\
				\inner{R^1_k \tau}{\tau} &\leq K_1 \inner{\Lambda_k^{-1}\tau}{\tau}
			}
		}
		for every $\tau\in V_k$. Moreover, for $s \in (0,1)$ and every $\tau \in V_k$
		\eqn{
			\label{eq:R_k-upper-bound-s}
			\inner{R^s_k \tau}{\tau} \leq K_0^{1-s}K_1^s \inner{\Lambda_{k}^{-s}\tau}{\tau},
		}
		where $K_0$ and $K_1$ are the same as in \eqref{eq:R_k-upper-bound-01}.
	\end{lemma}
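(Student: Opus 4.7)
The plan is to establish \eqref{eq:R_k-upper-bound-01} by direct arguments at the two endpoints and then deduce \eqref{eq:R_k-upper-bound-s} by operator interpolation, combining the generalized Jensen's operator inequality (Lemma \ref{lem:gen-jensen-inequality}) with the Löwner-Heinz inequality \eqref{eq:Loewner-Heinz}.

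For $s=0$, since $\Lambda_{k,\nu}^0 = I$ we have $R^0_k = \sum_{\nu} Q_{k,\nu}$, so $\inner{R^0_k \tau}{\tau} = \sum_\nu \norm{Q_{k,\nu}\tau}^2 \leq K_0 \norm{\tau}^2$, where $K_0$ is the maximum number of subdomains $\Omega_{k,\nu}$ covering any given point (uniformly bounded in $k$ by shape-regularity of $\T_k$). For $s=1$, we start from the identity \eqref{eq:Rs-inv-identity}; any decomposition $\sigma = \sum_\nu \sigma_\nu$ with $\sigma_\nu \in V_{k,\nu}$ satisfies, by the same finite-overlap property and Cauchy-Schwarz,
\eqns{
\norm{\sigma}^2 + \norm{\div\sigma}^2 \leq K_0 \sum_\nu \bigl(\norm{\sigma_\nu}^2 + \norm{\div\sigma_\nu}^2\bigr) = K_0 \sum_\nu \inner{\Lambda_{k,\nu}\sigma_\nu}{\sigma_\nu},
}
and taking the infimum over decompositions yields $R^1_k \leq K_1 \Lambda_k^{-1}$.

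For the intermediate case $s \in (0,1)$, introduce the direct-sum space $W = \bigoplus_{\nu \in \N_k} V_{k,\nu}$ with the natural inner product, and define the summation operator $E: W \to V_k$ by $E(\tau_\nu)_\nu = \sum_\nu \tau_\nu$, whose $L^2$-adjoint satisfies $E^*\tau = (Q_{k,\nu}\tau)_\nu$. Let $D$ be the block-diagonal operator on $W$ whose blocks are $\Lambda_{k,\nu}$; then direct computation gives
\eqns{
R^s_k = E D^{-s} E^* \quad \text{and} \quad E E^* = R^0_k.
}
Set $T = K_0^{-1/2} E^*$, so that $T^* T = K_0^{-1} R^0_k \leq I$ by the $s=0$ bound. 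Applying Lemma \ref{lem:gen-jensen-inequality} to the bounded, symmetric positive definite operator $D^{-1}$ on $W$ yields
\eqns{
K_0^{-1} R^s_k = T^* (D^{-1})^s T \leq (T^* D^{-1} T)^s = K_0^{-s} (R^1_k)^s,
}
that is, $R^s_k \leq K_0^{1-s}(R^1_k)^s$. Finally, the Löwner-Heinz inequality applied to $R^1_k \leq K_1 \Lambda_k^{-1}$ gives $(R^1_k)^s \leq K_1^s \Lambda_k^{-s}$, and composing the two estimates produces \eqref{eq:R_k-upper-bound-s}. The main obstacle is the second step: once $R^s_k$ is recognized as $E D^{-s} E^*$ with $E E^* = R^0_k$, Lemma \ref{lem:gen-jensen-inequality} cleanly interpolates between the two endpoint bounds.
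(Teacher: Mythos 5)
Your proof is correct and, for the interpolation step $s\in(0,1)$, follows essentially the same route as the paper: introduce the direct-sum auxiliary space, write $R^s_k = E D^{-s}E^*$ (the paper's $\adjoint{\mathbf{Q}}\mathbf{\Lambda}^{-s}\mathbf{Q}$), rescale so that Lemma \ref{lem:gen-jensen-inequality} applies, and finish with the Löwner--Heinz inequality on $R^1_k \leq K_1\Lambda_k^{-1}$. (Incidentally, your scaling $T = K_0^{-1/2}E^*$ is the correct one; the paper writes $\tilde{\mathbf{Q}} := K_0^{1/2}\mathbf{Q}$, which looks like a sign slip, since $\adjoint{\mathbf{Q}}\mathbf{Q}\le K_0$ requires dividing, not multiplying, by $K_0^{1/2}$ to achieve $\adjoint{\tilde{\mathbf{Q}}}\tilde{\mathbf{Q}}\le 1$.) The endpoint bounds are reached by somewhat different routes: the paper sketches the additive-Schwarz argument for $s=1$ via the strengthened Cauchy--Schwarz inequality $\Lambda\inner{P\tau}{P\tau} \le K_1\Lambda\inner{P\tau}{\tau}$ with $P=R^1_k\Lambda_k$, referring to the $H(\div)$ multigrid literature, whereas you argue directly from the variational characterization \eqref{eq:Rs-inv-identity} together with a finite-overlap/Cauchy--Schwarz bound on the synthesis operator, and then invert. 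The two are equivalent in content (both quantify the bounded overlap of the subdomains $\Omega_{k,\nu}$), but your derivation is more self-contained and makes explicit that one may take $K_1=K_0$.
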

	\begin{proof}
		The assertions are evident when $k=1$, with $K_0 = K_1 = 1$, so let $k \geq 2$. A proof of the second inequality of \eqref{eq:R_k-upper-bound-01} can be found in  e.g. \cite[Theorem 4.1]{arnold1997preconditioning}, so we limit ourselves only to sketch a proof here.
		Setting $P = R^1_k \Lambda_k = \sum_{\nu \in \N_k}P^1_{k,\nu}$, the uniform finite overlaps of the domains $\Omega_{k,\nu}$ ensure that
		\eqns{
			\Lambda\inner{P\tau}{P\tau} \leq K_1 \Lambda\inner{Pv}{v},
		}
		for some $K_1$, independent of $k$. It then follows that
		\eqns{
			\inner{R^s_k \Lambda_k \tau}{\Lambda_k \tau} = \Lambda\inner{P\tau}{\tau} \leq \Lambda\inner{P\tau}{P\tau}^{\frac{1}{2}}\Lambda\inner{\tau}{\tau}^{\frac{1}{2}} \leq  \left[ K_1\Lambda\inner{P\tau}{\tau}\right]^{\frac{1}{2}}\Lambda\inner{\tau}{\tau}^{\frac{1}{2}}.
		}
		Replacing $\tau$ with $\Lambda_k^{-1}\tau$ in the above yields 
		the second inequality of \eqref{eq:R_k-upper-bound-01}, 
		and the first inequality of \eqref{eq:R_k-upper-bound-01} can be proved similarly.
		
		For the intermediate result when $s\in (0,1)$ we introduce the auxiliary Hilbert space $\mathbf{V} := \bigoplus_{\nu \in \N_k}V_{k,\nu}$, and  define operators $\mathbf{Q}: V_k \to \mathbf{V}$  and $\mathbf{\Lambda}: \mathbf{V} \to \mathbf{V}$ given by
		\eqns{
			(\mathbf{Q}\tau)_\nu = Q_{k,\nu}\tau, 
		}
		and
		\eqns{
			\left(\mathbf{\Lambda}\boldsymbol{\tau}\right)_\nu = \Lambda_{k,\nu}\boldsymbol{\tau}_\nu,
		}
		for $\tau \in V_k$,  $\nu \in \N_k$, and $\boldsymbol{\tau} \in \mathbf{V}$. In particular, we note that $\mathbf{\Lambda}$ is symmetric positive definite and diagonal on $\mathbf{V}$. 
		Therefore,  $(\mathbf{\Lambda}^\theta \boldsymbol{\tau})_\nu = \Lambda_{k,\nu}^\theta \boldsymbol{\tau}_\nu$ for every $\theta \in \reals$, 
		and so we have $R^s_k = \adjoint{\mathbf{Q}}\mathbf{\Lambda}^{-s}\mathbf{Q}$ for $s\in[0,1]$.
		
		From the definition of $R^0_k$ and the first inequality of \eqref{eq:R_k-upper-bound-01},  $\adjoint{\mathbf{Q}} \mathbf{Q} \leq K_0$, and by scaling $\tilde{\mathbf{Q}} := K_0^{\frac{1}{2}}\mathbf{Q}$ we then have that 
		\eqns{
			\adjoint{\tilde{\mathbf{Q}}}\tilde{\mathbf{Q}} \leq 1.
		}
		Then, by Lemma \ref{lem:gen-jensen-inequality},
		\eqn{
			\label{eq:R_k-upper-bound-proof1}
			R^s_k = K_0 \adjoint{\tilde{\mathbf{Q}}}\mathbf{\Lambda}^{-s}\tilde{\mathbf{Q}} \leq K_0 \left( \adjoint{\tilde{\mathbf{Q}}}\mathbf{\Lambda}^{-1}\tilde{\mathbf{Q}} \right)^s = K_0 \left( K_0^{-1} R^1_k \right)^s = K_0^{1-s}\left(R^1_k\right)^s.
		}
		Now, the second inequality of \eqref{eq:R_k-upper-bound-01} states that $R^1_k \leq K_1 \Lambda_k^{-1}$. Inserting this into \eqref{eq:R_k-upper-bound-proof1} yields
		\eqns{
			R^s_k \leq K_0^{1-s}K_1^s \Lambda_k^{-s},
		}
		which is equivalent to \eqref{eq:R_k-upper-bound-s}.
	\end{proof}
	
	Establishing that Assumption \eqref{eq:Rs-condition2} in Theorem \ref{thm:Bsdiv-spectral-equivalence} holds turns out to be a more complicated matter. In view of \eqref{eq:Rs-inv-identity}, we see that to prove \eqref{eq:Rs-condition2} it is sufficient to find for every $\tau \in (I - P^s_{k,k-1})(V_k)$ a decomposition $\tau = \sum_{\nu} \tau_\nu$, where $\tau_\nu \in V_{k,\nu}$ so that 
	\eqns{
		\sum_{\nu \in \N_k} \inner{\Lambda_{k,\nu}^s \tau_\nu}{\tau_\nu} \leq C \inner{\Lambda_k^s \tau}{\tau},
	}
	for some constant $C$ that is independent of $k$ and $\tau$. 
	In the following lemma, we verify this stable decomposition, assuming some error bounds on the discrete Helmholtz decomposition.
	\begin{lemma}
		\label{lem:Rs_k-lower-bound}
		For $k=2,\ldots, J$, let $\tau \in (I-P^s_{k,k-1})V_k$ have the discrete Helmholtz decomposition
		\eqn{
			\label{eq:error-helmholtz-decomposition}
			\tau = \grad_k u + \curl q,
		}
		for some $u \in S_k$ and $q \in \curl_k V_k$. Assume there exists a constant $c$, independent of $k$ and $\tau$ so that
		\eqn{
			\label{eq:fractional-Hdiv-error-estimates}
			\norm{\grad_k u}^2 \leq c h_{k-1}^{2s}\inner{\Lambda_k^s \tau}{\tau}, \quad \text{ and } \quad \norm{q} \leq c h_{k-1}\norm{\tau}.
		}
		Then there exists a decomposition $\tau = \sum_{\nu \in \N_k} \tau_\nu$ with $\tau_\nu \in V_{k,\nu}$, and a constant $C$ so that
		\eqn{
			\label{eq:Rs_k-lower-bound}
			\sum_{\nu \in \N_k}\inner{\Lambda_{k,\nu}^s \tau_\nu}{\tau_\nu} \leq C \inner{\Lambda_k^s \tau}{\tau}.
		}
	\end{lemma}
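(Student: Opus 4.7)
The plan is to build the local decomposition from the discrete Helmholtz decomposition $\tau = \grad_k u + \curl q$, and estimate each local fractional contribution by exploiting that $\Lambda_{k,\nu}^s$ acts trivially on divergence-free fields. I would first invoke the $L^2$-stable decomposition \eqref{eq:L2-stable-decomposition} on $V_k$ to write $\grad_k u = \sum_{\nu \in \N_k} w_\nu$ with $w_\nu \in V_{k,\nu}$ and $\sum_\nu \norm{w_\nu}^2 \leq c \norm{\grad_k u}^2$, together with its analogue on $C_k$ to write $q = \sum_\nu q_\nu$ with $q_\nu \in C_{k,\nu}$ and $\sum_\nu \norm{q_\nu}^2 \leq c \norm{q}^2$. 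Setting $\tau_\nu := w_\nu + \curl q_\nu$ gives an element of $V_{k,\nu}$ (since $\curl$ preserves supports), and $\tau = \sum_\nu \tau_\nu$. Applying the elementary bound $\norm{\Lambda_{k,\nu}^{s/2}(w_\nu + \curl q_\nu)}^2 \leq 2\norm{\Lambda_{k,\nu}^{s/2}w_\nu}^2 + 2\norm{\Lambda_{k,\nu}^{s/2}\curl q_\nu}^2$ reduces the target estimate \eqref{eq:Rs_k-lower-bound} to controlling $\sum_\nu \inner{\Lambda_{k,\nu}^s \curl q_\nu}{\curl q_\nu}$ and $\sum_\nu \inner{\Lambda_{k,\nu}^s w_\nu}{w_\nu}$ separately by $C\inner{\Lambda_k^s \tau}{\tau}$.

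For the solenoidal piece, $\div(\curl q_\nu) = 0$ forces $\Lambda_{k,\nu} \curl q_\nu = \curl q_\nu$ and hence $\Lambda_{k,\nu}^s \curl q_\nu = \curl q_\nu$, so $\inner{\Lambda_{k,\nu}^s \curl q_\nu}{\curl q_\nu} = \norm{\curl q_\nu}^2 \leq C h_k^{-2}\norm{q_\nu}^2$ by an inverse inequality. Summing, applying $L^2$-stability on $C_k$, the second part of \eqref{eq:fractional-Hdiv-error-estimates}, and $h_{k-1} \leq \gamma h_k$ yields $\sum_\nu \norm{\curl q_\nu}^2 \leq C \norm{\tau}^2 \leq C \inner{\Lambda_k^s \tau}{\tau}$, where the last step uses $\Lambda_k \geq I$ and the Löwner-Heinz inequality \eqref{eq:Loewner-Heinz}. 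For the gradient piece, I would apply Lemma \ref{lem:gen-jensen-inequality} in its scalar form with $T : \reals \to V_{k,\nu}$ defined by $T(\alpha) = \alpha w_\nu / \norm{w_\nu}$ (so $\adjoint{T}T = 1$), obtaining the interpolation bound $\inner{\Lambda_{k,\nu}^s w_\nu}{w_\nu} \leq \inner{\Lambda_{k,\nu} w_\nu}{w_\nu}^s \norm{w_\nu}^{2(1-s)}$. The inverse inequality $\norm{\div w_\nu} \leq C h_k^{-1}\norm{w_\nu}$ then gives $\inner{\Lambda_{k,\nu} w_\nu}{w_\nu} \leq C h_k^{-2}\norm{w_\nu}^2$, so $\inner{\Lambda_{k,\nu}^s w_\nu}{w_\nu} \leq C h_k^{-2s}\norm{w_\nu}^2$; summing, using $L^2$-stability on $V_k$ and the first part of \eqref{eq:fractional-Hdiv-error-estimates}, the factor $h_k^{-2s}$ is absorbed by $h_{k-1}^{2s}$ (up to $\gamma^{2s}$), giving $\sum_\nu \inner{\Lambda_{k,\nu}^s w_\nu}{w_\nu} \leq C \inner{\Lambda_k^s \tau}{\tau}$.

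The main obstacle is the mismatch between having only a single $s$-dependent pair of bounds in \eqref{eq:fractional-Hdiv-error-estimates} and wanting a fractional estimate that would ordinarily emerge from operator interpolation between $s=0$ and $s=1$ endpoints. The Helmholtz splitting is exactly what sidesteps this difficulty: isolating $\curl q_\nu$ makes $\Lambda_{k,\nu}^s$ trivial on that component, so the genuine fractional analysis is needed only on the gradient part $w_\nu$, where the inverse-inequality scale $h_k^{-2s}$ matches the hypothesis scale $h_{k-1}^{2s}$ precisely and the scalar Jensen inequality handles the fractional power in a single step.
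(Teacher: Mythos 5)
Your proposal is correct and follows essentially the same route as the paper: the same local decomposition $\tau_\nu = \tilde\tau_\nu + \curl q_\nu$ built from $L^2$-stable splittings of the two Helmholtz components, the same triangle inequality, the same inverse inequalities at scale $h_k$, and the same final appeal to \eqref{eq:fractional-Hdiv-error-estimates} together with $h_{k-1}\leq\gamma h_k$. The only cosmetic difference is that you derive the intermediate bound $\inner{\Lambda_{k,\nu}^s w_\nu}{w_\nu}\leq\inner{\Lambda_{k,\nu}w_\nu}{w_\nu}^s\norm{w_\nu}^{2(1-s)}$ from a rank-one application of Lemma~\ref{lem:gen-jensen-inequality}, where the paper simply cites the standard interpolation inequality from \cite[Ch.~2.5]{lions1972nonhom}; both are valid and equivalent.
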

	\begin{proof}
		Fix $\tau \in (I-P^s_{k,k-1})V_k$, and let $u \in S_h$ and $q \in \curl_k V_k$ be the discrete Helmholtz decomposition according to \eqref{eq:error-helmholtz-decomposition}. Further, for $\nu \in \N_k$, let $\tilde{\tau}_\nu \in V_{k,\nu}$ and $q_\nu \in C_{k,\nu}$ be $L^2$-stable decompositions of $\grad_k u$ and $q$, respectively. That is, $q = \sum_{\nu}q_\nu$ and $\grad_k u = \sum_{\nu} \tilde{\tau}_\nu$ satisfies
		\eqn{
			\label{eq:Rs_k-lower-bound-proof1}
			\sum_{\nu}\norm{\tilde{\tau}_\nu}^2 \leq c \norm{\grad_k u}^2, \quad \text{ and } \quad \sum_{\nu}\norm{q_\nu}^2 \leq c \norm{q}^2,
		}
		according to \eqref{eq:L2-stable-decomposition}. Then $\tau = \sum_{\nu} \tau_\nu$, where we set $\tau_\nu = \tilde{\tau}_\nu + \curl q_\nu \in V_{k,\nu}$.
		
		By standard inverse inequality, $\inner{\Lambda_{k,\nu} \tilde{\tau}_\nu}{\tilde{\tau}_\nu} \leq c(1 + h_k^{-2})\norm{\tilde{\tau}_\nu}^2$, and the inequality $\inner{\Lambda_{k,\nu}^s \tilde{\tau}_\nu}{\tilde{\tau}_\nu} \leq \inner{\tilde{\tau}_\nu}{\tilde{\tau}_\nu}^{1-s}\inner{\Lambda_{k,\nu}\tilde{\tau}_\nu}{\tilde{\tau}_\nu}^s$ (cf. e.g. \cite[Ch. 2.5]{lions1972nonhom})
		\eqn{
			\label{eq:Rs_k-lower-bound-proof2}
			\inner{\Lambda_{k,\nu}^s\tilde{\tau}_\nu}{\tilde{\tau}_\nu} \leq c (1+h_k^{-2})^s\norm{\tilde{\tau}_\nu}^2.
		}
		Using the fractional inverse inequality \eqref{eq:Rs_k-lower-bound-proof2} and a standard inverse inequality for $q_\nu$, together with \eqref{eq:Rs_k-lower-bound-proof1},
		\eqns{
			\algnd{
				\sum_{\nu}\inner{\Lambda_{k,\nu}^s \tau_\nu}{\tau_\nu} &\leq 2\sum_{\nu}\left[ \inner{\Lambda_{k,\nu}^s\tilde{\tau}_\nu}{\tilde{\tau}_\nu} + \norm{\curl q_\nu}^2 \right] \\
				&\leq c \sum_{\nu}\left[(1+h_k^{-2})^s \norm{\tilde{\tau}_\nu}^2 + h_k^{-2}\norm{q_\nu}^2 \right] \\
				&\leq c\left[(1+h_k^{-2})^s\norm{\grad_k u}^2 + h_k^{-2}\norm{q}^2 \right]. 
			}
		}
		Then, \eqref{eq:Rs_k-lower-bound} follows from the above and assumption \eqref{eq:fractional-Hdiv-error-estimates}.
	\end{proof}
	\begin{rem}
		Verifying the assumption of Lemma \ref{lem:Rs_k-lower-bound} is by no means a trivial matter, 
		and falls beyond the scope of this paper. 
		As such, we leave the additive multigrid operators $B^s_{\div, h}$ 
		on what may be deemed an unsure theoretical footing.
		However, we will here propose an approach to prove the assumptions made in 
		Lemma \ref{lem:Rs_k-lower-bound}. 
		First off, the case $s=1$ was proved in \cite{arnold1997preconditioning,arnold2000multigrid}, 
		where the thrust of the argument relied on two-level error estimates and duality arguments.
		
		From the identity \eqref{eq:AP=QA-Hdiv} we see that
		\eqns{
			P^s_k - P^s_{k-1} = (\Lambda_k^{-s} - \Lambda_{k-1}^{-s}Q_{k-1})Q_k\Lambda_h^s,
		}
		and so the assumptions in Lemma \ref{lem:Rs_k-lower-bound} are concerned with 
		two-level error estimates for discretizations of fractional $H(\div)$ problems. 
		For the first estimate of \eqref{eq:error-helmholtz-decomposition}, 
		we recall the observation that $\Lambda_k^s$ behaves like an elliptic operator on $\grad_k S_k$, 
		and so the required error estimate can be obtained using 
		similar techniques as in \cite[Thm. 4.3]{bonito2015numerical}. 
		There, the authors proved error estimates, under some regularity assumptions on the domain $\Omega$. 
		The proof uses the integral formulation of the fractional Laplacian,
		\eqns{
			(-\Delta)^s = \frac{2\sin (\pi s)}{\pi}\int_0^\infty t^{2s-1}(I - t^2\Delta)^{-1} \intd t. 
		}
		See also \cite[Sec. 10.4]{birman1987spectral}.
		The advantage of this approach is that error estimates for the fractional Laplacian are transferred 
		to error estimates for problems of the form
		\eqns{
			(I - t^2 \Delta) u = f,
		}
		where an abundance of results are available.
	\end{rem}
	
	\section{Numerical experiments}
	\label{sec:numerical_experiments}
	We now present a series of numerical experiments, aimed at validating the theoretical results established in previous sections. Specifically, in section \ref{sec:fractional-hdiv} we test the preconditioner $B^s_{\div,h}$ defined in \eqref{eq:Bsdiv-def}, and the spectral equivalence established in Theorem \ref{thm:Bsdiv-spectral-equivalence}. 
	
	We then consider 
	\eqn{
		\label{eq:numex-fraclap-prob}
		A_h^{s} u = f,
	}
	for a given in $s \in [-1,0]$ and $f \in S_h$.
	In section \ref{sec:exact_auxiliary_space_preconditioner}, \eqref{eq:numex-fraclap-prob} is first solved using $\adjoint{\grad_h} \Lambda_h^{-(1+s)}\grad_h$ as preconditioner, before we use $B^s_h$ defined in \eqref{eq:auxSpace-discrete-PC-def} as preconditioner. These experiments are to validate Theorem \ref{thm:auxSpace-discrete-PC-speceq} and Corollary \ref{cor:discrete-auxSpace-speceq}, respectively.
	
	Where applicable, the numerical tests are conducted using preconditioned conjugate method, with random initial guess. Convergence of the iterations are reached when the relative preconditioned residual, i.e. $\frac{\inner{B r_k}{r_k}}{\inner{B r_0}{r_0}}$, where $r_k$ is the $k$-th residual and $B$ is the preconditioner, is below a given tolerance.
	
	Note that in the following, all fractional powers of matrices are constructed by full spectral decomposition, requiring the solution of large generalized eigenvalue problems (see \cite{kuchta2016trace2d} for details). 
	As such, the preconditioned iterative methods will not be computationally optimal, but the tests are designed only to validate the theoretical bounds on the condition numbers. 
	This problem will not be encountered if $B^s_h$ is used as part of a preconditioner for trace problems as presented in the introduction. 
	
	\subsection{Preconditioning for $\Lambda_h^s$}
	\label{sec:fractional-hdiv}
	In the first set of numerical experiments we consider the following problem: For a given $f \in V_h$ and $s\in [0,1]$, find $\sigma \in V_h$ so that
	\eqn{
		\label{eq:numex-fractional-hdiv}
		\Lambda_h^s \sigma = f.
	}
	We take $\Omega = [0,1]^2 \subset \reals^2$, and $\T_h$ is a uniform partition of $\Omega$. We take $V_h$ to be the lowest order Raviart-Thomas space relative to the mesh $\T_h$. We solve the linear system arising from \eqref{eq:numex-fractional-hdiv} using preconditioned conjugate gradient method, with $B^s_{\div,h}$ given by \eqref{eq:Bsdiv-def} as preconditioner. The results can be seen in Table \ref{tab:hdiv-additive-MG}, from which we see that both iteration counts and condition numbers stay bounded independently of the dimension of $V_h$, in accordance with Theorem \ref{thm:Bsdiv-spectral-equivalence}.
	
	\begin{table}[t]
		\begin{tabular}{l | l l l l }
			\diagbox{$s$}{$N$} & 208& 800& 3136& 12416\\
			\hline
			$0.0$& $20(4.9)$& $21(4.9)$& $21(4.9)$& $21(4.9)$\\ 
			$0.1$& $20(4.6)$& $21(4.9)$& $22(5.2)$& $23(5.5)$\\ 
			$0.2$& $22(5.6)$& $24(6.2)$& $25(6.8)$& $27(7.4)$\\ 
			$0.3$& $24(6.6)$& $26(7.5)$& $27(8.1)$& $28(8.6)$\\ 
			$0.4$& $26(8.0)$& $28(8.7)$& $29(9.2)$& $29(9.6)$\\ 
			$0.5$& $27(9.2)$& $30(9.8)$& $30(10.3)$& $30(10.5)$\\ 
			$0.6$& $29(10.4)$& $31(10.9)$& $31(11.3)$& $31(11.5)$\\ 
			$0.7$& $30(11.6)$& $32(12.1)$& $32(12.4)$& $32(12.5)$\\ 
			$0.8$& $31(13.0)$& $33(13.4)$& $33(13.5)$& $33(13.7)$\\ 
			$0.9$& $32(14.5)$& $35(14.9)$& $34(14.9)$& $34(15.0)$\\ 
			$1.0$& $33(16.1)$& $36(16.5)$& $36(16.6)$& $35(16.5)$\\ 
		\end{tabular}
		\caption{Numerical results preconditioning $\Lambda_h^s$. Table show number of conjugate gradient interations until reaching relative error tolerance $10^{-9}$. Estimated condition numbers are shown inside parentheses. $N = \dim V_h$ and  $J = 4$ in all tests.}
		\label{tab:hdiv-additive-MG}
	\end{table}
	
	\subsection{Auxiliary space preconditioner}
	\label{sec:exact_auxiliary_space_preconditioner}
	We now consider \eqref{eq:numex-fraclap-prob} on the same computational domain as in the previous set of experiments. That is, $\Omega = [0,1]^2$, and $\T_h$ is a uniform triangulation of $\Omega$. For the discrete space $S_h$ we use piecewise constants relative to $\T_h$. In Table \ref{tab:exact_auxiliary_space_preconditioner}, we can view the calculated condition number of $\adjoint{\grad_h} \Lambda_h^{-(1+s)}\grad_h\, A^s_h$, as well as the condition number expected from Theorem \ref{thm:auxSpace-discrete-PC-speceq}. The results show both uniform $h$-independence and is in good agreement with the theory.
	
	Finally, we solve \eqref{eq:numex-fraclap-prob} using preconditioned conjugate gradient method, with $B^s_h = \adjoint{\grad_h} B^{1+s}_{\div,h} \grad_h$ defined in \eqref{eq:auxSpace-discrete-PC-def} as preconditioner. $B^{1+s}_{\div,h}$ is chosen as the additive multigrid operator proposed in section \ref{sec:decompositions}, with $V_h$ as the lowest order Raviart-Thomas space relative to $\T_h$. The results can be viewed in Table \ref{tab:neg_fraclap_preconditioner}. Again, we see that both iteration counts and estimated condition numbers stay reasonably bounded, in agreement with Corollary \ref{cor:discrete-auxSpace-speceq}, although a slight increase becomes pronounced as $s$ approaches $0$.
	\begin{table}[t]
		\begin{tabular}{l | l l l | l}
			\diagbox{$s$}{$N$} & 512& 2048& 8192 & $\beta^{-2(1+s)}$\\
			\hline
			$-1.0$&  $1.000$& $1.000$& $1.000$ & $1.000$\\ 
			$-0.9$& $1.005$& $1.005$& $1.005$ & $1.005$ \\ 
			$-0.8$& $1.010$& $1.010$& $1.010$ & $1.010$\\ 
			$-0.7$& $1.015$& $1.015$& $1.015$ & $1.015$\\ 
			$-0.6$& $1.020$& $1.020$& $1.020$ & $1.020$\\ 
			$-0.5$& $1.025$& $1.025$& $1.025$ & $1.025$\\ 
			$-0.4$& $1.030$& $1.030$& $1.030$ & $1.030$ \\ 
			$-0.3$& $1.035$& $1.035$& $1.035$ & $1.035$\\ 
			$-0.2$& $1.040$& $1.040$& $1.040$& $1.041$\\ 
			$-0.1$& $1.045$& $1.045$& $1.045$& $1.046$\\ 
			$0.0$&  $1.050$& $1.051$& $1.051$& $1.051$\\ 
		\end{tabular}
		\caption{Numerical results for exact auxiliary space preconditioner. Table show condition number of $\grad_h^* \Lambda_h^{{-(1+s)}} \grad_h A^s_h$. $N$ is dimension of $S_h$. The rightmost column shows expected condition number from Theorem \ref{thm:auxSpace-discrete-PC-speceq} with $\beta^{-2} = 1.051$.  }
		\label{tab:exact_auxiliary_space_preconditioner}
	\end{table}
	
	\begin{table}[t]
		\begin{tabular}{l | l l l l }
			\diagbox{$s$}{$N$} & 128& 512& 2048& 8192\\
			\hline
			$-1.0$& $18(4.3)$& $19(4.4)$& $20(4.6)$& $21(4.6)$\\ 
			$-0.9$& $17(3.7)$& $19(3.7)$& $19(3.7)$& $19(3.7)$\\ 
			$-0.8$& $17(3.2)$& $18(3.2)$& $18(3.2)$& $18(3.2)$\\ 
			$-0.7$& $17(2.9)$& $18(2.9)$& $18(2.9)$& $18(3.0)$\\ 
			$-0.6$& $17(2.8)$& $18(3.0)$& $18(3.1)$& $19(3.1)$\\ 
			$-0.5$& $18(3.2)$& $19(3.3)$& $20(3.4)$& $20(3.6)$\\ 
			$-0.4$& $19(3.6)$& $21(3.8)$& $21(3.8)$& $22(4.4)$\\ 
			$-0.3$& $19(4.0)$& $22(4.2)$& $22(4.2)$& $24(5.3)$\\ 
			$-0.2$& $20(4.5)$& $23(4.8)$& $24(5.1)$& $26(6.2)$\\ 
			$-0.1$& $21(5.1)$& $25(5.4)$& $26(6.1)$& $28(7.2)$\\ 
			$0.0$& $22(5.8)$& $27(6.2)$& $28(7.4)$& $30(8.3)$\\ 
		\end{tabular}
		\caption{Numerical results for preconditioning $A_h^s$ with $B^s_h$ given by \eqref{eq:auxSpace-discrete-PC-def}. Table show number of conjugate gradient interations until reaching error tolerance $10^{-10}$. Estimated condition numbers are shown inside parentheses. $N$ is dimension of $S_h$. $J = 4$ in all tests.}
		\label{tab:neg_fraclap_preconditioner}
	\end{table}
\newpage
\clearpage
\bibliographystyle{abbrv}
\vspace{.125in}
\bibliography{arxiv_fractional_pc}

\end{document}